\newcommand{\C}{\mathscr{C}}
\newcommand{\R}{\mathbb{R}}
\newcommand{\N}{\mathbb{N}}
\newcommand{\K}{\mathbb{K}}
\newcommand{\eps}{\varepsilon}
\newcommand{\lraup}{\relbar\joinrel\rightharpoonup}
\newcommand{\bs}{\boldsymbol}
\numberwithin{equation}{section}
\newtheorem{thm}{Theorem}[section]
\newtheorem{rem}{Remark}[section]
\newtheorem{lemma}{Lemma}[section]
\newtheorem{prop}{Proposition}[section]
\newtheorem{example}{Example}[section]
\begin{document}

\title[]{On nonlocal variational and quasi-variational inequalities with fractional gradient} 

\author[J.F. Rodrigues]{Jos\'e Francisco Rodrigues}
\address{CMAFcIO -- Departamento de Matemática, Faculdade de Ciências, Universidade de Lisboa
P-1749-016 Lisboa, Portugal}
\email{jfrodrigues@ciencias.ulisboa.pt}

\author[L. Santos]{Lisa Santos} 
\address{CMAT and Departamento de Matemática, Escola de Ciências, Universidade do Minho, Campus de Gualtar, 4710-057 Braga, Portugal}
\email{lisa@math.uminho.pt}

\begin{abstract}
We extend classical results on variational inequalities with convex sets with gradient constraint to a new class of fractional partial differential equations in a bounded domain with constraint on the distributional Riesz fractional gradient, the $\sigma$-gradient ($0<\sigma<1$). We establish continuous dependence results with respect to the data, including the threshold of the fractional $\sigma$-gradient. Using these properties we give new results on the existence to a class of quasi-variational variational inequalities with fractional gradient constraint via compactness and via contraction arguments. Using the approximation of the solutions with a family of quasilinear penalisation problems we show the existence of generalised Lagrange multipliers for the $\sigma$-gradient constrained problem, extending previous results for the classical gradient case, i.e., with  $\sigma=1$.
\end{abstract}

\maketitle

\section{Introduction}

\thispagestyle{empty}

\footnote{This is a revised and corrected version of the authors paper ``{\em On nonlocal variational and quasi-variational inequalities with fractional gradient}. Appl. Math. Optim. 80 (2019), no. 3, 835--852''.}In a series of two interesting papers \cite{ShiehSpector2015} and \cite{ShiehSpector2018}, Shieh and Spector have considered a new class of fractional partial differential equations. Instead of using the well-known fractional Laplacian, their starting concept is the distributional Riesz fractional gradient of order $\sigma\in(0,1)$, which will be called here the $\sigma$-gradient $D^\sigma$, for brevity: for $u\in L^p(\R^N)$, $1<p<\infty$, we set
\begin{equation}
\label{dsigma}
\big(D^\sigma u\big)_j=\frac{\partial^\sigma u}{\partial x_j^\sigma}=\frac{\partial\ }{\partial x_j}I_{1-\sigma}u,\qquad 0<\sigma<1,\quad j=1,\ldots,N,
\end{equation}
where $\frac{\partial\ }{\partial x_j}$ is taken in the distributional sense, for every $v\in\C^\infty_0(\R^N)$,
$$\Big\langle \frac{\partial^\sigma u}{\partial x_j^\sigma}, v\Big\rangle=-
\Big\langle I_{1-\sigma}u,\frac{\partial v }{\partial x_j} \Big\rangle=
-\int_{\R^N}(I_{1-\sigma}u)\frac{\partial v }{\partial x_j}\, dx,$$
with $I_\alpha$ denoting the Riesz potential of order $\alpha$, $0<\alpha<1$:
$$I_\alpha u(x)=(I_\alpha*u)(x)=\gamma_{N,\alpha}\int_{\R^N}\frac{u(y)}{|x-y|^{N-\alpha}}\,dy,\qquad\text{with }\gamma_{N,\alpha}=\frac{\Gamma(\frac{N-\alpha}2)}{\pi^\frac{N}2\,2^\alpha\,\Gamma(\frac\alpha2)}.$$

As it was shown in \cite{ShiehSpector2015}, $D^\sigma$ has nice properties for $u\in\C^\infty_0(\R^N)$, namely
\begin{equation}\label{propdsigma1}
D^\sigma u\equiv D(I_{1-\sigma}u)=I_{1-\sigma}*Du,
\end{equation}
\begin{equation}\label{propdsigma2}
(-\Delta)^\sigma u=-\sum_{j=1}^N\frac{\partial^\sigma\ }{\partial x_j^\sigma}\, \frac{\partial^\sigma\ }{\partial x_j^\sigma}\,u,
\end{equation}
where the well-known fractional Laplacian may be given, for a suitable constant $C_{N,\sigma}$, by (see, for instance, \cite{DiNezzaPalatucciValdinoci2012}):
$$(-\Delta)^\sigma u\equiv C_{N,\sigma}\text{ P.V.}\int_{\R^N}\frac{u(x)-u(y)}{|x-y|^{N+2\sigma}}\,dy.$$

It was also observed in \cite{ShiehSpector2018} that the $\sigma$-gradient is an example of the non-local gradients considered in \cite{MengeshaSpector2015}, which can be also given by
\begin{equation}\label{propdsigma3}
D^\sigma u(x)=R(-\Delta)^{\frac{\sigma}{2}} u(x)=(N+\sigma-1)\gamma_{N,1-\sigma}\int_{\R^N}\frac{u(x)-u(y)}{|x-y|^{N+\sigma}}\,\frac{x-y}{|x-y|}\,dy,
\end{equation}
in terms of the vector-valued Riesz transform (see \cite{Stein1970}, with $\rho_N=\text{\small $\Gamma\big(\frac{N+1}2\big)/\pi^{\frac{N+1}2}$}$):
$$Rf(x)=\rho_N\text{ P.V.}\int_{\R^N}f(y)\,\frac{x-y}{|x-y|^{N+1}}\,dy.$$

We observe that, from the properties of $D^\sigma$ and a result of \cite{Kurokawa1981} on the Riesz kernel as approximation of the identity as $\alpha\rightarrow0$, the $\sigma$-gradient approaches the standard gradient as $\sigma\rightarrow1$: if $Du\in L^p(\R^N)^N\cap  L^q(\R^N)^N$, $1<q<p$, then $D^\sigma u\underset{\sigma\rightarrow1}{\longrightarrow}Du$ in $L^p(\R^N)^N$.

Introducing the vector space of fractional differentiable functions as the closure of $\C^\infty_0(\R^N)$ with respect to the norm 
$$\|u\|_{\sigma,p}^p=\|u\|_{L^p(\R^N)}^p+\|D^\sigma u\|^p_{L^p(R^N)^N},\quad 0<\sigma<1,\ p>1,$$
by \cite[Theorem 1.7]{ShiehSpector2015} it is exactly the Bessel potencial space $L^{\sigma,p}(\R^N)\hookrightarrow W^{s,p}(\R^N)$, $0\le s<\sigma$, where $W^{s,p}(\R^N)$ denotes the usual fractional Sobolev space. In \cite{ShiehSpector2015} the solvability of the fractional partial differential equations with variable coefficients and Dirichlet data was treated in the case $p=2$, as well as the minimization of the integral functionals of the $\sigma$-gradient with $p$-growth was also considered, leading to the solvability of a fractional $p$-Laplace equation of a novel type.

In this work we are concerned with the Hilbertian case $p=2$ in a bounded domain $\Omega\subset\R^N$, with Lipschitz boundary, where the homogeneous Dirichlet problem for a general linear PDE with measurable coefficients is considered under an additional constraint on the $\sigma$-gradient. We shall consider all solutions in the usual Sobolev space
\begin{equation}
\label{FuncSpace}
H_0^\sigma(\Omega),\quad\text{with norm }\|u\|_{H_0^\sigma(\Omega)}=\|D^\sigma u\|_{L^2(\R^N)^N},\quad 0<\sigma<1,
\end{equation}
where here and whenever necessary, we still denote by $u$ the extension of $u$ by zero outside of $\Omega$. By the Sobolev-Poincar\'e inequality, this norm is equivalent to the usual Hilbertian norm induced from $L^{\sigma,2}(\R^N)=W^{\sigma,2}(\R^N)=H^\sigma(\R^N)$, $0<\sigma<1$ in the closure of the Cauchy sequences of smooth functions with compact support in $\Omega$, i.e., in  $\C^\infty_0(\Omega)$ (see \cite{ShiehSpector2015}).

For nonnegative functions $g\in L^\infty({\R^N})$, we consider the nonempty convex sets of the type
\begin{equation}
\label{kapa}
\K_g^\sigma=\big\{v\in H^\sigma_0(\Omega):|D^\sigma v|\le g\text{ a.e. in }\R^N\big\}.
\end{equation}

Let $f\in L^1(\Omega)$ and $A:\R^N\rightarrow\R^{N\times N}$ be a measurable, bounded and positive definite matrix. We shall consider, in Section \ref{2}, the well-posedness of the variational inequality
\begin{equation}
\label{iv}
u\in\K_g^\sigma:\qquad \int_{\R^N}A D^\sigma u\cdot D^\sigma (v-u)\ge \int_{\Omega} f(v-u),\qquad\forall v\in \K_g^\sigma.
\end{equation}

In particular, we obtain precise estimates for the continuous dependence of the solution $u$ with respect to $f$ and $g$, and so we extend well-known results for the classical case $\sigma=1$ (see \cite{RodriguesSantos2019} and its references).

Extending the result of \cite{AzevedoSantos2017} for the gradient ($\sigma=1$) case, we prove in Section \ref{3} the existence of generalised Lagrange multipliers for the $\sigma$-gradient constrained problem. More precisely, we show the existence of $(\lambda,u)\in L^\infty(\R^N)^{\bs '}\times \Upsilon_\infty^\sigma(\Omega)$ such that
\begin{subequations}
\begin{align}
\label{lm1}
&\bs\langle \lambda D^\sigma u,D^\sigma v\bs\rangle_{\big(L^\infty(\R^N)^N\big)^{\bs '}\times L^\infty(\R^N)^N}+\int_{\R^N} AD^\sigma u\cdot D^\sigma v=\int_{\Omega}f v,\qquad\forall v\in \Upsilon_\infty^\sigma(\Omega),\\
\label{lm2}
&|D^\sigma u|\le g\ \text{ a.e. in }{\R^N},\qquad \lambda\ge 0 \text { and }  \lambda(|D^\sigma u|-g)=0\  \text { in }L^\infty(\R^N)^{\bs '}
\end{align}
\end{subequations}
and, moreover, $u$ solves \eqref{iv}.
Here, for each $\sigma$, we have set
\begin{equation}
\label{Ginfty}
\Upsilon_\infty^\sigma(\Omega)=\big\{\upsilon\in H^\sigma_0(\Omega): D^\sigma \upsilon\in L^\infty(\R^N)^N\big\},\qquad 0<\sigma<1,
\end{equation}
and 
$$\langle\lambda\bs\alpha,\bs\beta\rangle_{(L^\infty(\R^N)^N){\bs '}\times L^\infty(\R^N)^N}=\langle\lambda,\bs\alpha\cdot\bs\beta\rangle_{L^\infty(\R^N){\bs '}\times L^\infty(\R^N)}\quad\forall\lambda\in L^\infty(\R^N){\bs '}\ \forall\bs\alpha,\bs\beta\in L^\infty(\R^N)^N.$$

Finally, in the Section \ref{4} we consider the solvability of solutions to quasi-variational inequalities corresponding to \eqref{iv} when the threshold $g=G[u]$ and therefore also the convex set \eqref{kapa} depend on the solution $u\in\K_{G[u]}^\sigma$. We give sufficient conditions on the nonlinear and nonlocal operator $v\mapsto G[v]$ to obtain the existence of at least one solution $u$ of \eqref{iv} with $\K_g^\sigma$ replaced by $K_{G[u]}^\sigma$, by compactness methods, as in \cite{KunzeRodrigues2000} for the case $\sigma=1$. In a special case, when $G[u](x)=\Gamma(u)\varphi(x)$ is strictly positive and separates variables with a  Lipschitz functional $\Gamma:L^2 (\Omega)\rightarrow\R^+$, we adapt an idea of \cite{HintermullerRautenberg2013} (see also \cite{RodriguesSantos2019}) to obtain, by a contraction principle, the existence and uniqueness of the solution of the quasi-variational inequality under the ``smallness" of the product of $f$ with the Lipschitz constant of $\Gamma$ and the inverse of its positive lower bound.

\section{The variational inequality with $\sigma$-gradient constraint}\label{2}

For some $a_*, a^*>0$, let $A=A(x):\R^N\rightarrow\R^{N\times N}$ be a bounded and measurable matrix, not necessarily symmetric, such that, for a.e. $x\in\R^N$ and all $\xi,\eta\in\R^N$:
\begin{equation}
\label{aa}
a_*|\xi|^2\le A(x)\xi\cdot\xi\quad\text{and}\quad A(x)\xi\cdot\eta \le a^*|\xi||\eta|.
\end{equation}

Fixed $\nu>0$, we define
\begin{equation}
\label{linfnu}L^\infty_\nu({\R^N})=\big\{v\in L^\infty({\R^N}):v(x)\ge \nu>0\text { a.e. }x\in{\R^N}\big\}.
\end{equation}

Indeed, we recall (see for instance \cite{DemengelDemengel2012})
the embedding for the fractional Sobolev spaces $0<\sigma\le 1$, $1<p<\infty$:
\begin{subequations} 
\begin{align}
\label{sob1}
&W^{\sigma,p}(\Omega)\hookrightarrow L^q(\Omega),\quad\text{ for every }q\le \tfrac{Np}{N-\sigma p},\ \text{ if }\sigma p<N,\\
\label{sob2}
&W^{\sigma,p}(\Omega)\hookrightarrow L^q(\Omega),\quad\text{ for every }q< \infty,\ \text{ if }\sigma p=N,\\
\label{sob3}
&W^{\sigma,p}(\Omega)\hookrightarrow L^\infty(\Omega)\cap \C^{0,\beta}(\overline{\Omega}),\quad\text{ for every }0<\beta\le\sigma -\tfrac{N}p,\ \text{ if }\sigma p>N,
\end{align}
\end{subequations}
with continuous embedding, which are also compact if also $q<\tfrac{Np}{N-\sigma p}$ in \eqref{sob1} and $\beta<\sigma -\tfrac{N}p$ in \eqref{sob3}. These embedding properties extend to the the Bessel potential space $L^{\sigma,p}(\R^N)$ exactly with the same values of   $\sigma, p, q$ and $\beta$ (see \cite[Thm. 2.2]{ShiehSpector2015}). In particular, we have
\begin{equation}
\label{hinfty}H^\sigma_0(\Omega)\hookrightarrow L^{2^*}(\Omega)\quad\text{and}\quad L^{2^\#}(\Omega)\hookrightarrow H^{-\sigma}(\Omega)=\big(H^\sigma_0(\Omega)\big)^{\bs '},\ 0<\sigma<1,
\end{equation}
where we set $2^*=\frac{2N}{N-2\sigma}$ and $2^{\#}=\frac{2N}{N+2\sigma}$ when $\sigma<\frac{N}2$, and if $N=1$ we denote $2^*=q$, $2^{\#}=q'=\frac{q}{q-1}$ when $\sigma=\frac{1}2$ and $2^*=\infty$, $2^\#=1$ when $\sigma>\frac12$. 


For any $g\in L^\infty_\nu({\R^N})$, the  convex set $\K_g^\sigma$ is non-empty and closed and we have the following auxiliary result.
	\begin{prop} \label{dsigmalq} For any $g\in L^\infty_\nu({\R^N})$, we have the following inclusions:
		\begin{equation}
		\label{inclusions}
		\K_g^\sigma\subset \Upsilon^\sigma_\infty(\Omega)\subset \C^{0,\beta}(\overline{\Omega})\subset L^\infty(\Omega),
		\end{equation}
		for all $0<\beta<\sigma$, where $\C^{0,\beta}(\overline{\Omega})$ is the space of H\"older continuous functions with exponent $\beta$
		and the estimate
		\begin{equation}
			\label{maj}\| u\|_{L^\infty(\Omega)}\le \kappa\| u\|_{H^\sigma_0(\Omega)},\qquad\text{for any }u\in\Upsilon_\infty^\sigma(\Omega),
			\end{equation}
			holds, where $\kappa>0$ depends on $\Omega$ through the Sobolev imbeddings and on $\|D^\sigma u\|_{L^\infty(\R^N)^N}$.
\end{prop}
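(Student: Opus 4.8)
The plan is to prove the three inclusions in \eqref{inclusions} one at a time, only the middle one requiring work, and then to deduce \eqref{maj} by carrying the constants along the same argument. The first inclusion is immediate from the definitions: if $v\in\K^\sigma_g$ then $|D^\sigma v|\le g\le\|g\|_{L^\infty(\R^N)}$ a.e.\ in $\R^N$, so $D^\sigma v\in L^\infty(\R^N)^N$, and since $v\in H^\sigma_0(\Omega)$ by the very definition of $\K^\sigma_g$, we get $v\in\Upsilon^\sigma_\infty(\Omega)$. The last inclusion $\C^{0,\beta}(\overline{\Omega})\subset L^\infty(\Omega)$ is trivial, as $\overline{\Omega}$ is compact.

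For the middle inclusion $\Upsilon^\sigma_\infty(\Omega)\subset\C^{0,\beta}(\overline{\Omega})$, fix $u\in\Upsilon^\sigma_\infty(\Omega)$, identified with its extension by zero, so that $u\in H^\sigma_0(\Omega)=L^{\sigma,2}(\R^N)$ and $D^\sigma u\in L^\infty(\R^N)^N\cap L^2(\R^N)^N$; by interpolation $D^\sigma u\in L^p(\R^N)^N$ for every $p\in[2,\infty]$, with $\|D^\sigma u\|_{L^p(\R^N)^N}\le\|D^\sigma u\|_{L^\infty(\R^N)^N}^{1-2/p}\|D^\sigma u\|_{L^2(\R^N)^N}^{2/p}$. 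The idea is a finite bootstrap through the Bessel potential spaces. We use that, for $1<p<\infty$, a function $w$ with $w\in L^p(\R^N)$ and $D^\sigma w\in L^p(\R^N)^N$ belongs to $L^{\sigma,p}(\R^N)$ (by the results of \cite{ShiehSpector2015}), together with the Sobolev embeddings \eqref{sob1}--\eqref{sob3} for $L^{\sigma,p}(\R^N)$ (\cite[Thm.\ 2.2]{ShiehSpector2015}). When $2\sigma<N$, \eqref{sob1} gives $u\in L^{2^*}(\R^N)$ with $2^*=\tfrac{2N}{N-2\sigma}$; since $2^*\ge2$ we also have $D^\sigma u\in L^{2^*}(\R^N)^N$, hence $u\in L^{\sigma,2^*}(\R^N)$. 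Iterating, the exponents obey $\tfrac1{q_{k+1}}=\tfrac1{q_k}-\tfrac\sigma N$ with $q_0=2$, so after finitely many steps $\sigma q_K\ge N$. If $\sigma q_K>N$, then $u\in L^{\sigma,q_K}(\R^N)\hookrightarrow\C^{0,\beta}(\overline{\Omega})$ by \eqref{sob3}; in particular $u$ is bounded, and being supported in $\overline{\Omega}$ it then lies in $L^r(\R^N)$ for every $r\ge2$, so $u\in L^{\sigma,r}(\R^N)$ and $u\in\C^{0,\beta}(\overline{\Omega})$ for every $0<\beta<\sigma$, letting $r\to\infty$. If instead $\sigma q_K=N$, one first applies \eqref{sob2} to reach some $r>N/\sigma$ and concludes as before. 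The remaining case $2\sigma\ge N$ (only $N=1$) is analogous: the first step already lands in \eqref{sob2} or \eqref{sob3}, after which the same passage to $L^\infty$ and one further application of the embeddings covers all $0<\beta<\sigma$.

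Finally, \eqref{maj} is obtained by propagating the constants through this finite chain: each fractional Sobolev embedding contributes a constant depending only on $\Omega$ through the imbeddings, the Sobolev--Poincar\'e inequality gives $\|u\|_{L^{2^*}(\Omega)}\le C(\Omega)\|D^\sigma u\|_{L^2(\R^N)^N}=C(\Omega)\|u\|_{H^\sigma_0(\Omega)}$ at the relevant exponents, and the interpolation estimate above controls the intermediate norms $\|D^\sigma u\|_{L^p(\R^N)^N}$ in terms of $\|u\|_{H^\sigma_0(\Omega)}$ and $\|D^\sigma u\|_{L^\infty(\R^N)^N}$; combining these at the finitely many stages, the last embedding $L^{\sigma,r}(\R^N)\hookrightarrow L^\infty(\R^N)$ yields $\|u\|_{L^\infty(\Omega)}\le\kappa\|u\|_{H^\sigma_0(\Omega)}$ with $\kappa$ depending on $\Omega$ and on $\|D^\sigma u\|_{L^\infty(\R^N)^N}$. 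The main obstacle is the middle inclusion, and within it the one genuinely non-routine ingredient is the Bessel-potential characterisation allowing the fractional Sobolev embeddings to be re-applied at each stage of the bootstrap; the rest is bookkeeping of exponents and constants.
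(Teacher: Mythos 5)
Your proposal is correct and follows essentially the same route as the paper: the first and last inclusions are immediate, and the middle one is obtained by the same bootstrap through the Bessel potential spaces $L^{\sigma,q}(\R^N)$, using the interpolation bound $\|D^\sigma u\|_{L^p(\R^N)^N}^p\le\|D^\sigma u\|_{L^\infty(\R^N)^N}^{p-2}\|D^\sigma u\|_{L^2(\R^N)^N}^{2}$ together with the fractional Sobolev embeddings of \cite[Thm.~2.2]{ShiehSpector2015}, iterating until the exponent exceeds $N/\sigma$. The estimate \eqref{maj} is then read off by tracking constants through the finitely many stages, exactly as in the paper.
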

\begin{proof}  For  $u\in\Upsilon_\infty^\sigma(\Omega)$, as  $H^\sigma_0(\Omega)\subset L^{2^*}(\Omega)$ and  recalling that $u=0$ on $\R^N\setminus B_R$, with the estimate
$$\int_{R^N}|D^\sigma u|^p\le\|D^\sigma u\|_{L^\infty(\R^N)^N}^{p-2}\int_{\R^N}|D^\sigma u|^2,\qquad\forall\, 2<p<\infty,$$
we obtain $u\in L^{\sigma,2^{*} }(\R^N)\subset L^{2^{**}}(\R^N)$, where $2^{**}=\frac{2^*N}{N-2^*\sigma}$ if $2^*\sigma<N$ and $2^{**}$ is any $q$ finite if $2^*\sigma\ge N$,
by using Sobolev imbeddings (see \cite[Thm. 2.2]{ShiehSpector2015}). Iterating  with a bootstrap argument, we obtain $u\in L^{\sigma,q}(\R^N)^N\subset \C^{0,\beta}(\overline\Omega)$, for any $q>\tfrac{N}\sigma$, with $\beta=\sigma-\tfrac{N}q$.
\end{proof} 


Therefore, in the right hand side of the variational inequality \eqref{iv}, for $g_i\in L^\infty(\R^N)$, we can take $f_i\in L^1(\Omega)$, and the first two theorems give continuous dependence results with precise estimates for two different problems with $i=1,2$:
\begin{equation*}
(2.7)_i\qquad\qquad u_i\in\K_{g_i}^\sigma:\qquad \int_{\R^N} AD^\sigma u_i\cdot D^\sigma(v-u_i)\ge\int_\Omega f_i(v-u_i),\quad\forall v\in\K_{g_i}^\sigma.\qquad\qquad
\end{equation*}

\setcounter{equation}{7}

\begin{thm}\label{thm21}
	Under the assumptions \eqref{aa}, for each $f_i\in L^1(\Omega)$ and each $g_i\in L^\infty(\R^N)$, $g_i\ge0$, there exists a unique solution $u_i$ to $(2.7)_i$ such that
	\begin{equation}
	\label{kapai}
	u_i\in\K_{g_i}^\sigma\cap\C^{0,\beta}(\overline{\Omega}),\quad\text{ for all }0<\beta<\sigma.
	\end{equation}
	
	When $g_1=g_2$, the solution map $L^1(\Omega)\ni f\mapsto u\in H^\sigma_0(\Omega)$ is  Lipschitz continuous, i.e., for $C_1=\kappa/a_*>0$, where $\kappa>0$ is the constant \eqref{maj}, we have
	\begin{equation}
	\label{fs}\|u_1-u_2\|_{H^\sigma_0(\Omega)}\le C_1\|f_1-f_2\|_{L^1(\Omega)}.
	\end{equation}
	
	Moreover, if in addition $f_i\in L^{2^{\#}}(\Omega)$, $i=1,2$, with $2^\#$ defined in \eqref{hinfty} and $g_1=g_2$, then $L^{2^\#}(\Omega)\ni f\mapsto u\in H^\sigma_0(\Omega)$ is  Lipschitz continuous:
	\begin{equation}
	\label{lip}
	\|u_1-u_2\|_{H^\sigma_0(\Omega)}\le C_\#\|f_1-f_2\|_{L^{2^\#}(\Omega)},
	\end{equation}
	for $C_\#=C_*/{a_*}>0$, where $C_*$ is the constant of the Sobolev embedding $H^\sigma_0(\Omega)\hookrightarrow L^{2^*}(\Omega)$.
\end{thm}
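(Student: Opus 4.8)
The plan is to reduce everything to the Lions--Stampacchia theorem for the ``regular'' right-hand sides $f_i\in L^{2^\#}(\Omega)$ together with one Minty-type a priori estimate that yields, at once, uniqueness, the two continuous-dependence bounds, and --- by approximation --- existence for merely integrable data. For $f_i\in L^{2^\#}(\Omega)$, the bilinear form $a(w,v)=\int_{\R^N}AD^\sigma w\cdot D^\sigma v$ is continuous and coercive on $H^\sigma_0(\Omega)$ by \eqref{aa} (with coercivity constant $a_*$ in the norm \eqref{FuncSpace}); the convex set $\K^\sigma_{g_i}$ is nonempty (it contains $0$) and strongly --- hence weakly --- closed, since $w\mapsto D^\sigma w$ is continuous into $L^2(\R^N)^N$ and a.e.\ limits preserve the pointwise bound $|D^\sigma w|\le g_i$; and by \eqref{hinfty} the functional $v\mapsto\int_\Omega f_iv$ lies in $H^{-\sigma}(\Omega)=\big(H^\sigma_0(\Omega)\big)^{\bs'}$. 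Hence the Lions--Stampacchia theorem provides a unique $u_i\in\K^\sigma_{g_i}$ solving $(2.7)_i$; since $|D^\sigma u_i|\le g_i\in L^\infty(\R^N)$ forces $u_i\in\Upsilon^\sigma_\infty(\Omega)$, the inclusions \eqref{inclusions} of Proposition \ref{dsigmalq} give $u_i\in\C^{0,\beta}(\overline\Omega)$ for every $0<\beta<\sigma$, which is \eqref{kapai}.

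For the a priori estimate, take $g_1=g_2=g$, so $\K^\sigma_{g_1}=\K^\sigma_{g_2}$, and let $u_1,u_2$ be \emph{any} two solutions; since $u_i\in L^\infty(\Omega)$ by Proposition \ref{dsigmalq}, the integrals $\int_\Omega f_i(v-u_i)$ make sense even for $f_i\in L^1(\Omega)$. Testing $(2.7)_1$ with $v=u_2$ and $(2.7)_2$ with $v=u_1$ and adding, the second-order terms collapse to $-\int_{\R^N}AD^\sigma(u_1-u_2)\cdot D^\sigma(u_1-u_2)$, so coercivity gives
\[
a_*\|u_1-u_2\|_{H^\sigma_0(\Omega)}^2\le\int_\Omega(f_1-f_2)(u_1-u_2).
\]
If $f_i\in L^{2^\#}(\Omega)$, bounding the right-hand side by H\"older and $H^\sigma_0(\Omega)\hookrightarrow L^{2^*}(\Omega)$ (constant $C_*$) yields $\le C_*\|f_1-f_2\|_{L^{2^\#}(\Omega)}\|u_1-u_2\|_{H^\sigma_0(\Omega)}$, hence \eqref{lip} with $C_\#=C_*/a_*$. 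If only $f_i\in L^1(\Omega)$, I bound it instead by $\|f_1-f_2\|_{L^1(\Omega)}\|u_1-u_2\|_{L^\infty(\Omega)}$ and invoke \eqref{maj} for $u_1-u_2\in\Upsilon^\sigma_\infty(\Omega)$ --- legitimate because $|D^\sigma(u_1-u_2)|\le2g$ a.e., so the constant $\kappa$ in \eqref{maj} is controlled by $2\|g\|_{L^\infty(\R^N)}$ --- obtaining \eqref{fs} with $C_1=\kappa/a_*$. Choosing $f_1=f_2$ gives uniqueness.

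To finish the existence statement for general $f\in L^1(\Omega)$, I would approximate $f$ by its truncations $f_n=\max\{-n,\min\{f,n\}\}\in L^\infty(\Omega)\subset L^{2^\#}(\Omega)$, which converge to $f$ in $L^1(\Omega)$ by dominated convergence, and let $u_n\in\K^\sigma_g$ be the solutions already obtained. Applying \eqref{fs} to the pairs $(u_n,u_m)$ shows $\{u_n\}$ is Cauchy in $H^\sigma_0(\Omega)$, so $u_n\to u\in\K^\sigma_g$ strongly (the set is closed) and, by \eqref{maj} again (with $\|D^\sigma(u_n-u)\|_{L^\infty(\R^N)^N}\le2\|g\|_{L^\infty(\R^N)}$), also $u_n\to u$ in $L^\infty(\Omega)$. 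Passing to the limit in $(2.7)$ with $f_n$, for fixed $v\in\K^\sigma_g$, is then routine: the quadratic terms converge from the strong $L^2$-convergence of $D^\sigma u_n$ and the boundedness of $A$, while $\int_\Omega f_n(v-u_n)\to\int_\Omega f(v-u)$ because $\|f_n-f\|_{L^1(\Omega)}\to0$, $\sup_n\|u_n\|_{L^\infty(\Omega)}<\infty$ and $\|u_n-u\|_{L^\infty(\Omega)}\to0$.

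The one point requiring care is the interaction between the $L^1$ datum and the constraint: the inequality $(2.7)_i$ is meaningful only thanks to $\K^\sigma_g\subset L^\infty(\Omega)$, and the whole ``estimate $\Rightarrow$ existence'' scheme for integrable data rests on the $L^\infty$-bound \eqref{maj}, whose constant must be tracked as depending --- through $\|D^\sigma\,\cdot\,\|_{L^\infty(\R^N)^N}$, i.e.\ on $\|g\|_{L^\infty(\R^N)}$ --- on the \emph{difference} of two solutions rather than on a single solution. Apart from this bookkeeping, the argument is a standard combination of the Lions--Stampacchia theorem with the Minty symmetrisation trick, so I expect no further obstacle.
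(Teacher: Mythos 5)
Your proposal is correct and follows essentially the same route as the paper: Stampacchia's theorem for $f_i\in L^{2^\#}(\Omega)$, the Minty symmetrisation (testing each inequality with the other solution) to get $a_*\|u_1-u_2\|^2_{H^\sigma_0(\Omega)}\le\int_\Omega(f_1-f_2)(u_1-u_2)$, the Sobolev embedding for \eqref{lip} and the $L^\infty$ bound \eqref{maj} for \eqref{fs}, then approximation of $L^1$ data by $L^{2^\#}$ data and the Cauchy property from \eqref{fs}. Your explicit tracking of the dependence of $\kappa$ on $\|D^\sigma(u_1-u_2)\|_{L^\infty(\R^N)^N}\le 2\|g\|_{L^\infty(\R^N)}$ and the detailed limit passage for the $L^1$ case are slightly more careful than the paper's sketch, but the argument is the same.
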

\begin{proof}
	Suppose that $f_i\in L^{2^\#}(\Omega)\subset H^{-\sigma}(\Omega)$. Since the assumption \eqref{aa} implies that $A$ defines a continuous bilinear and coercive form over $H^\sigma_0(\Omega)$, the existence and uniqueness of the solution $u_i\in\K^\sigma_{g_i}$ to $(2.7)_i$ is an immediate consequence of the Stampacchia Theorem (see, for instance, \cite[p. 95]{Rodrigues1987}), and \eqref{kapai} follows from \eqref{inclusions}.
	
	With our notation \eqref{FuncSpace}, the estimate \eqref{lip} follows easily from $(2.7)_i$ with $g_1=g_2$ and $v=u_j$ ($i,j=1,2$, $i\neq j$) from
	\begin{equation*}
	a_*\|\overline u\|_{H^\sigma_0(\Omega)}^2\le\int_{\R^N} AD^\sigma \overline u\cdot D^\sigma\overline u\le\|\overline f\|_{L^{2^\#}(\Omega)}\|\overline u\|_{L^{2^*}(\Omega)}\le C_*\|\overline f\|_{L^{2^\#}(\Omega)}\|\overline u\|_{H^\sigma_0(\Omega)},
	\end{equation*}
	where we have set $\overline u=u_1-u_2$ and $\overline f=f_1-f_2$.
	
	By \eqref{maj}, letting $\kappa>0$ be such that
	$$\|\overline u\|_{L^\infty(\Omega)}\le \kappa\|\overline u\|_{H^\sigma_0(\Omega)},$$
	we may easily conclude the estimate \eqref{fs} with $C_1=\kappa/a_*$ for $f_1, f_2\in L^{2^\#}(\Omega)\subset L^1(\Omega)$ from (1.5)$_i$ and
	$$a_*\|\overline u\|_{H^\sigma_0(\Omega)}^2\le\|\overline f\|_{L^1(\Omega)}\|\overline u\|_{L^\infty(\Omega)}\le \kappa\|\overline f\|_{L^1(\Omega)} \|\overline u\|_{H^\sigma_0(\Omega)}.$$
	
	Finally, the solvability of $(2.7)_i$ for $f_i$ only in $L^1(\Omega)$ can be easily obtained by taking an approximating sequence of $f_i^n\in L^{2^\#}(\Omega)$ such that $f_i^n\underset{n}\rightarrow f_i$ in $L^1(\Omega)$ and using \eqref{fs} for that (Cauchy) sequence. The proof is complete.
\end{proof}

\begin{rem}
	As in \cite{ShiehSpector2015} it is possible to extend the variational inequality with $\sigma$-gradient to arbitrary open domains $\Omega\subset\R^N$ with a generalised Dirichlet data $\varphi\in H^\sigma(\R^N)$ such that $I_{1-\sigma}*\varphi$ is well-defined and $D^\sigma\varphi\in L^\infty(\R^N)$. This would require in the definition \eqref{kapa} of $\K_g^\sigma$ to replace $H^\sigma_0(\Omega)$ by the space
	$$H^\sigma_\varphi=\big\{v\in H^\sigma(\R^N):v=\varphi\text{ a.e. in }\R^N\setminus\Omega\big\}$$
	and, in addition, technical compatibility assumptions on $\varphi$ and $g$ to guarantee that the new $\K_g^\sigma\not=\emptyset$.
\end{rem}

\begin{rem}
	It is well-known that if, in addition, $A$ is symmetric, i.e. $A=A^T$, the variational inequality \eqref{iv} corresponds (and is equivalent) to the optimisation problem (see, for instance, \cite{Rodrigues1987})
	$$u\in\K_g^\sigma:\qquad \mathcal J(u)\le\mathcal J(v),\qquad\forall v\in\K_g^\sigma,$$
	where $\mathcal J:\K_g^\sigma\rightarrow\R$ is the convex functional
	$$\mathcal J(v)=\frac12\int_{\R^N}AD^\sigma v\cdot D^\sigma v-\int_\Omega fv.$$
\end{rem}

\begin{thm}\label{thm22}
	Under the framework of the previous theorem, when $f_1=f_2\in L^1(\Omega)$, the solution map
	$$L^\infty_\nu(\R^N)\ni g\mapsto u\in H^\sigma_0(\Omega)$$
	is $\frac12$-H\"older continuous, i.e., there exists $C_\nu>0$ such that
	\begin{equation}
	\label{cdg}\|u_1-u_2\|_{H^\sigma_0(\Omega)}\le C_\nu\|g_1-g_2\|^\frac12_{L^\infty(\R^N)}.
	\end{equation}
\end{thm}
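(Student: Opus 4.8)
The plan is to exploit the scaling invariance of the constraint $|D^\sigma v|\le g$ under multiplication by positive scalars — the classical device for gradient-constrained variational inequalities. Set $M=\|g_1-g_2\|_{L^\infty(\R^N)}$. If $M\ge\nu$ the estimate \eqref{cdg} is immediate from the a priori bound $\|u_i\|_{H^\sigma_0(\Omega)}\le R$ — obtained by testing $(2.7)_i$ with $v=0\in\K_{g_i}^\sigma$, using coercivity \eqref{aa} and the bound \eqref{maj}, which applies since $\|D^\sigma u_i\|_{L^\infty(\R^N)^N}\le\|g_i\|_{L^\infty(\R^N)}$ — because then $\|g_1-g_2\|_{L^\infty(\R^N)}^{1/2}\ge\nu^{1/2}$. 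So assume $M<\nu$ and put $\theta=1-M/\nu\in(0,1)$. Since $g_i\ge\nu$ a.e., we have $(1-\theta)g_i(x)=(M/\nu)g_i(x)\ge M$, hence $\theta g_i(x)\le g_i(x)-M\le g_j(x)$ a.e.\ in $\R^N$ for $\{i,j\}=\{1,2\}$; therefore $|D^\sigma(\theta u_j)|=\theta|D^\sigma u_j|\le\theta g_j\le g_i$ a.e., i.e.\ $\theta u_1\in\K_{g_2}^\sigma$ and $\theta u_2\in\K_{g_1}^\sigma$.

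Next I would test $(2.7)_1$ with the admissible $v=\theta u_2$ and $(2.7)_2$ with $v=\theta u_1$, and add the two inequalities. Writing $\overline u=u_1-u_2$ and using the bilinearity of $(w,z)\mapsto\int_{\R^N}AD^\sigma w\cdot D^\sigma z$ (recall $A$ need not be symmetric, only bilinear), together with $\int_\Omega f\big((\theta u_2-u_1)+(\theta u_1-u_2)\big)=(\theta-1)\int_\Omega f(u_1+u_2)$, the sum collapses to
$$\int_{\R^N}AD^\sigma\overline u\cdot D^\sigma\overline u\le(1-\theta)\Big[\int_\Omega f(u_1+u_2)-\int_{\R^N}AD^\sigma u_1\cdot D^\sigma u_2-\int_{\R^N}AD^\sigma u_2\cdot D^\sigma u_1\Big].$$
On the left, coercivity gives $a_*\|\overline u\|_{H^\sigma_0(\Omega)}^2$; on the right the bracket is bounded by a constant $C$: the two bilinear terms by $a^*R^2$ each (Cauchy--Schwarz in $L^2(\R^N)$ and the upper bound in \eqref{aa}), and $\int_\Omega f(u_1+u_2)$ by $2\kappa R\,\|f\|_{L^1(\Omega)}$ via \eqref{maj}. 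Since $1-\theta=M/\nu$, this yields $a_*\|\overline u\|_{H^\sigma_0(\Omega)}^2\le(C/\nu)\,\|g_1-g_2\|_{L^\infty(\R^N)}$, and \eqref{cdg} follows with $C_\nu=\sqrt{C/(a_*\nu)}$, enlarged so as to cover also the case $M\ge\nu$ treated above.

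The only genuine idea here is the dilation trick $\theta u_j$; the rest is the bilinear algebra of adding the two variational inequalities plus Cauchy--Schwarz, so there is no serious obstacle. The one point that deserves care is that the right-hand side of the displayed inequality is controlled only through the a priori bound $R$ and the $L^\infty$-embedding of Proposition \ref{dsigmalq}, so that, strictly speaking, the constant $C_\nu$ in \eqref{cdg} also depends on $\|f\|_{L^1(\Omega)}$ and on an upper bound for $\|g_1\|_{L^\infty(\R^N)}$ and $\|g_2\|_{L^\infty(\R^N)}$ through $\kappa$.
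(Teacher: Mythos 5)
Your proof is correct and follows essentially the same route as the paper: the dilation trick producing admissible comparison functions ($\theta u_j\in\K_{g_i}^\sigma$), followed by cross-testing the two variational inequalities and bounding the resulting terms via the a priori estimates from Theorem \ref{thm21} and \eqref{maj}. The only cosmetic difference is your scaling factor $1-M/\nu$, which forces a (harmless) case split at $M\ge\nu$, whereas the paper uses $\nu/(\nu+\eta)$ and needs no split; your closing caveat that $C_\nu$ also depends on $\|f\|_{L^1(\Omega)}$ and on $\|g_i\|_{L^\infty(\R^N)}$ through $\kappa$ applies equally to the paper's constant.
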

\begin{proof}
	Let $\eta=\|g_1-g_2\|_{L^\infty(\R^N)}$ and, for $i,j=1,2$, $i\neq j$, notice that
	$$u_{i_j}=\frac{\nu}{\nu+\eta}u_i\in\K^\sigma_{g_j},$$
	if $u_i$ denotes the unique solution of $(2.15)_i$ to $g_i$ and $f_i$.
	
	Observe that for $i=1,2,$
	$$|u_i-u_{i_j}|\le\frac\eta{\nu+\eta}|u_i|\le \frac{\eta}\nu |u_i|\quad \text{and} \quad  |D^\sigma(u_i-u_{i_j})|\le\frac\eta{\nu+\eta}|D^\sigma u_i|\le \frac{\eta}\nu |D^\sigma u_i|.$$
	
	Hence, letting $v=u_{i_j}$ in  $(2.7)_j$ and using \eqref{aa} we get \eqref{cdg} from
	\begin{multline*}
	a_*\|u_1-u_2\|^2_{H^\sigma_0(\Omega)}\le\int_{\R^N} AD^\sigma(u_1-u_2)\cdot D^\sigma(u_1-u_2)\\
	\le \int_{\R^N}AD^\sigma u_1\cdot D^\sigma(u_{2_1}-u_2)+
	\int_{\R^N}AD^\sigma u_2\cdot D^\sigma(u_{1_2}-u_1)+\int_\Omega f\big((u_1-u_{1_2})+(u_2-u_{2_1})\big)\\
\le2\tfrac{a^*\eta}{\nu}\|D^\sigma u_1\|_{L^2(\R^N)}\|D^\sigma u_2\|_{L^2(\R^N)}+\tfrac\eta\nu\|f\|_{L^1(\Omega)}(\|u_1\|_{L^\infty(\Omega)}+\|u_2\|_{L^\infty(\Omega)})\\
\le C^{'}_\nu\|g_1-g_2\|_{L^\infty(\R^N)},
	\end{multline*}
	with $C^{'}_\nu=2\kappa^2\|f\|^2_{L^1(\Omega)}(a^*+a_*)/a_*^2\nu>0$, by using Theorem \ref{thm21} and the constant $\kappa$ is defined by \eqref{maj}.
\end{proof}

\begin{rem}
	Using the trick of the above proof, if $g_n\underset{n}{\rightarrow} g$ in $L^\infty(\R^N)$	 for a sequence $g_n\in L^\infty_\nu(\R^N)$, it is clear that, for any $w\in\K_g^\sigma$ we can choose $w_n\in\K_{g_n}^\sigma$ such that $w_n\underset{n}{\rightarrow} w$ in $H^\sigma_0(\Omega)$. On the other hand, also for any sequence $w_n\underset{n}{\lraup}w$ in $H^\sigma_0(\Omega)$-weak, with each $w_n\in\K_{g_n}^\sigma$, $g_n\underset{n}{\rightarrow} g$ in $L^\infty(\R^N)$ implies that also $w\in\K_g^\sigma$. These two conditions determine that if $g_n\underset{n}{\rightarrow} g$ in $L^\infty_\nu(\R^N)$ then the respective convex sets $\K_{g_n}^\sigma$ converge in the Mosco sense to $\K_g^\sigma$. An open question is to extend this convergence to the case $0<\sigma<1$, by dropping the strict positivity condition on $g_n$ and $g$, as in \cite{AzevedoSantos2004} for $\sigma=1$.
\end{rem} 
\section{Existence of Lagrange multipliers}\label{3}

In this section we prove the existence of solution of the problem \eqref{lm1}-\eqref{lm2}.

For $\eps\in(0,1)$ and denoting $\widehat k_\eps=\widehat k_\eps(D^\sigma u^\eps)=k_\eps(|D^\sigma u^\eps|-g)$ for simplicity, we define a family of approximated quasi-linear problems
\begin{equation}\label{ap}
\int_{\R^N}\big(\widehat k_\eps(D^\sigma u^\eps)D^\sigma u^\eps+AD^\sigma u^\eps\big)\cdot D^\sigma v=\int_{\Omega}f v\qquad\forall v\in H^\sigma_0(\Omega)
\end{equation}
where  $k_\eps:\R\rightarrow\R$ is defined by
$$ k_\eps(s)=0\text{ for }s<0,\qquad k_\eps(s)=e^\frac{s}\eps-1\text{ for }0\le s\le\tfrac1\eps\qquad k_\eps(s)=e^\frac1{\eps^2}-1\text{ for }s>\tfrac1\eps.$$

\begin{prop}\label{tapprox}
Suppose that $g\in L^\infty_\nu(\R^N)$,  $f\in L^{2^\#}(\Omega)$ and $A:{\R^N}\rightarrow\R^{N\times N}$ is a measurable, bounded and positive definite matrix. Then the quasi-linear problem \eqref{ap} has a unique solution $u^\eps\in H^\sigma_0(\Omega)$.
\end{prop}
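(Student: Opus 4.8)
The plan is to recognise \eqref{ap} as the operator equation $\mathcal A_\eps u^\eps=f$ for a bounded, coercive, strictly monotone and hemicontinuous map $\mathcal A_\eps$ on the reflexive (indeed Hilbert) space $H^\sigma_0(\Omega)$, and then to invoke the Browder--Minty theorem on surjectivity of monotone coercive operators. First, since $f\in L^{2^\#}(\Omega)$, the inclusion \eqref{hinfty} shows that $v\mapsto\int_\Omega fv$ is a bounded linear functional on $H^\sigma_0(\Omega)$, i.e.\ an element of $H^{-\sigma}(\Omega)=\big(H^\sigma_0(\Omega)\big)^{\bs '}$. I would then define $\mathcal A_\eps:H^\sigma_0(\Omega)\to H^{-\sigma}(\Omega)$ by
$$\big\langle\mathcal A_\eps u,v\big\rangle=\int_{\R^N}\big(\widehat k_\eps(D^\sigma u)\,D^\sigma u+A\,D^\sigma u\big)\cdot D^\sigma v .$$
Because $0\le k_\eps\le e^{1/\eps^2}-1=:M_\eps$, the integrand is dominated by $(M_\eps+a^*)\,|D^\sigma u|\,|D^\sigma v|$, so $\mathcal A_\eps$ is well defined and $\|\mathcal A_\eps u\|_{H^{-\sigma}(\Omega)}\le(M_\eps+a^*)\|u\|_{H^\sigma_0(\Omega)}$, hence bounded.

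Next I would check the structural hypotheses. \emph{Coercivity:} since $\widehat k_\eps(D^\sigma u)=k_\eps(|D^\sigma u|-g)\ge0$, \eqref{aa} gives $\big\langle\mathcal A_\eps u,u\big\rangle\ge\int_{\R^N}A\,D^\sigma u\cdot D^\sigma u\ge a_*\|u\|^2_{H^\sigma_0(\Omega)}$, which divided by $\|u\|_{H^\sigma_0(\Omega)}$ tends to $+\infty$. \emph{Strict monotonicity:} the key point is the pointwise inequality, valid for a.e.\ $x$ and all $\xi,\zeta\in\R^N$, with $h_x(r):=k_\eps(r-g(x))$ nonnegative and nondecreasing on $r\ge0$ and $a=h_x(|\xi|)$, $b=h_x(|\zeta|)$,
$$\big(h_x(|\xi|)\xi-h_x(|\zeta|)\zeta\big)\cdot(\xi-\zeta)\ \ge\ a|\xi|^2+b|\zeta|^2-(a+b)|\xi||\zeta|=\big(|\xi|-|\zeta|\big)\big(a|\xi|-b|\zeta|\big)\ \ge\ 0,$$
the first inequality coming from $\xi\cdot\zeta\le|\xi||\zeta|$ and the last from the fact that the two factors have the same sign, since $r\mapsto h_x(r)\,r$ is nondecreasing; integrating this and adding the linear contribution, which by \eqref{aa} is bounded below by $a_*\|u-w\|^2_{H^\sigma_0(\Omega)}$, yields $\big\langle\mathcal A_\eps u-\mathcal A_\eps w,u-w\big\rangle\ge a_*\|u-w\|^2_{H^\sigma_0(\Omega)}$, which is strictly positive for $u\neq w$. \emph{Hemicontinuity:} for fixed $u,v,w$ and $t_n\to t$ in $\R$ one has $D^\sigma(u+t_nv)\to D^\sigma(u+tv)$ in $L^2(\R^N)^N$, hence a.e.\ along a subsequence, and since $k_\eps$ is continuous and bounded, dominated convergence gives $\big\langle\mathcal A_\eps(u+t_nv),w\big\rangle\to\big\langle\mathcal A_\eps(u+tv),w\big\rangle$ (the limit being independent of the subsequence).

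Once these properties are in place, the Browder--Minty theorem will provide a solution $u^\eps\in H^\sigma_0(\Omega)$ of $\mathcal A_\eps u^\eps=f$, i.e.\ of \eqref{ap}, and strict monotonicity forces uniqueness: subtracting the equations for two solutions $u^\eps_1,u^\eps_2$ and testing with $v=u^\eps_1-u^\eps_2$ would give $a_*\|u^\eps_1-u^\eps_2\|^2_{H^\sigma_0(\Omega)}\le0$. I expect the only genuinely non-routine step to be the verification of monotonicity of the nonlinear term $\xi\mapsto k_\eps(|\xi|-g(x))\xi$, i.e.\ the pointwise vector inequality displayed above; everything else is the standard checklist for the monotone operator method, and the precise truncated-exponential form of $k_\eps$ will be irrelevant here beyond its continuity, boundedness, nonnegativity and monotonicity.
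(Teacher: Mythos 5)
Your proposal is correct and follows essentially the same route as the paper: the authors also define the operator $B_\eps$ associated with \eqref{ap}, assert that it is bounded, strongly monotone, coercive and hemicontinuous, and invoke the standard monotone-operator existence theory (citing Lions) for existence and uniqueness. You have simply written out the verifications — in particular the pointwise monotonicity of $\xi\mapsto k_\eps(|\xi|-g(x))\xi$, which is indeed the only non-routine point — that the paper leaves implicit.
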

\begin{proof} The operator $B_\eps:H^\sigma_0(\Omega)\rightarrow H^{-\sigma}(\Omega)$ defined by
	$$\langle B_\eps v,w\rangle =\int_{\R^N}\big(\widehat k_\eps (D^\sigma v)D^\sigma v+AD^\sigma v\big)\cdot D^\sigma w$$
is bounded,  strongly monotone, coercive and hemicontinuous, so problem \eqref{ap} has a unique solution (see, for instance, \cite{Lions1969}).
\end{proof}

\begin{lemma}\label{estimates-l1}
If $g\in L^\infty_\nu({\R^N})$,  $f\in L^{2^\#}(\Omega)$,  $A:{\R^N}\rightarrow\R^{N\times N}$ is a measurable, bounded and positive definite matrix and $1\leq q<\infty$, there exist  positive constants $C$ and $C_q$ such that, for $0<\varepsilon<1$, setting $\widehat k_\eps=k_\eps(|D^\sigma u^\eps|-g)$, the solution $u^\varepsilon$ of
	the approximated problem \eqref{ap} satisfies
	\begin{subequations}
	\begin{align}
	\label{LM_grad2}
	\|D^\sigma u^\eps\|_{L^2(\R^N)}&\le C,\\
	\label{LM_lemma_0}
	\|\widehat k_\varepsilon| D^\sigma u^\varepsilon|^2\|_{L^1({\R^N})} &\leq C,\\
	\label{LM_lemma_1}
	\|\widehat k_\varepsilon\|_{L^1({\R^N})} &\leq C,\\
	\label{LM_lemma_4}
	\|\widehat  k_\eps D^\sigma   u^\varepsilon\|_{( L^\infty({\R^N})^N)^{\bs '}} &\leq C,\\
		\label{LM_lemma_5}
	\|\widehat k_\eps \|_{L^\infty({\R^N})^{\bs '}} &\leq C\\
	\label{LM_lemma_2}
	\|D^\sigma  u^\varepsilon\|_{ L^q(\R^N)^N}&\le C_q.
	\end{align}
	\end{subequations}
\end{lemma}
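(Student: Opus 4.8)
The plan is to obtain all six bounds from a single family of test-function choices in \eqref{ap}, exploiting the fact that $k_\varepsilon\ge 0$ and $\widehat k_\varepsilon |D^\sigma u^\varepsilon|^2 \ge 0$, so that the penalty term always has a good sign. First I would test \eqref{ap} with $v=u^\varepsilon$ itself. The left-hand side then equals $\int_{\R^N}\widehat k_\varepsilon |D^\sigma u^\varepsilon|^2 + \int_{\R^N} AD^\sigma u^\varepsilon\cdot D^\sigma u^\varepsilon$; by coercivity \eqref{aa} the second term dominates $a_*\|D^\sigma u^\varepsilon\|_{L^2(\R^N)}^2$, while the right-hand side is bounded by $\|f\|_{L^{2^\#}(\Omega)}\|u^\varepsilon\|_{L^{2^*}(\Omega)}\le C_*\|f\|_{L^{2^\#}(\Omega)}\|D^\sigma u^\varepsilon\|_{L^2(\R^N)}$ using \eqref{hinfty}. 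Since both terms on the left are nonnegative, this single inequality yields \eqref{LM_grad2} (absorbing the linear factor) and then, plugging \eqref{LM_grad2} back in, also \eqref{LM_lemma_0}. For \eqref{LM_lemma_1} I would use that on $\{|D^\sigma u^\varepsilon|\ge g\}$ one has $|D^\sigma u^\varepsilon|\ge g\ge\nu$, hence $\widehat k_\varepsilon\le \widehat k_\varepsilon |D^\sigma u^\varepsilon|^2/\nu^2$ pointwise (and $\widehat k_\varepsilon=0$ where $|D^\sigma u^\varepsilon|<g$), so \eqref{LM_lemma_1} follows from \eqref{LM_lemma_0} with $C$ replaced by $C/\nu^2$.

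Next I would derive the two dual-norm bounds \eqref{LM_lemma_4} and \eqref{LM_lemma_5}. For any $\bs\beta\in L^\infty(\R^N)^N$ write, directly from \eqref{ap} rearranged,
$$\int_{\R^N}\widehat k_\varepsilon D^\sigma u^\varepsilon\cdot\bs\beta \;=\; \int_\Omega f\,w \;-\; \int_{\R^N} AD^\sigma u^\varepsilon\cdot\bs\beta,$$
but this is not quite legitimate since $\bs\beta$ need not be of the form $D^\sigma w$; instead I would estimate $\widehat k_\varepsilon D^\sigma u^\varepsilon$ in $(L^\infty)^{\bs '}$ pairing against a generic $\bs\beta$ via Cauchy–Schwarz in the weighted measure $\widehat k_\varepsilon\,dx$: $\big|\int \widehat k_\varepsilon D^\sigma u^\varepsilon\cdot\bs\beta\big|\le \big(\int\widehat k_\varepsilon|D^\sigma u^\varepsilon|^2\big)^{1/2}\big(\int\widehat k_\varepsilon|\bs\beta|^2\big)^{1/2}\le \big(\int\widehat k_\varepsilon|D^\sigma u^\varepsilon|^2\big)^{1/2}\big(\int\widehat k_\varepsilon\big)^{1/2}\|\bs\beta\|_{L^\infty}$, which by \eqref{LM_lemma_0}–\eqref{LM_lemma_1} gives \eqref{LM_lemma_4}; the same computation with $D^\sigma u^\varepsilon$ deleted and one factor $\widehat k_\varepsilon$ split as $\widehat k_\varepsilon^{1/2}\cdot\widehat k_\varepsilon^{1/2}$ gives \eqref{LM_lemma_5} from \eqref{LM_lemma_1} alone. (Here one uses $\widehat k_\varepsilon\le\widehat k_\varepsilon|D^\sigma u^\varepsilon|^2/\nu^2$ again, or simply $|\bs\beta|^2\le\|\bs\beta\|_\infty^2$.)

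The genuinely delicate estimate is \eqref{LM_lemma_2}: an $L^q$ bound on $D^\sigma u^\varepsilon$ for every finite $q$, uniform in $\varepsilon$. The strategy is a bootstrap in the spirit of the proof of Proposition \ref{dsigmalq}, but now the nonlinear term must be controlled. I would split $D^\sigma u^\varepsilon$ according to the sets $E_\varepsilon=\{|D^\sigma u^\varepsilon|\le g\}$ and its complement: on $E_\varepsilon$ one has $|D^\sigma u^\varepsilon|\le g\le\|g\|_{L^\infty}$, so that part is in $L^\infty$ trivially; on the complement, where $\widehat k_\varepsilon>0$, I would rewrite the equation \eqref{ap} as a linear equation $\int (\mathcal A_\varepsilon D^\sigma u^\varepsilon)\cdot D^\sigma v=\int_\Omega fv$ with the scalar-perturbed matrix $\mathcal A_\varepsilon=AI+\widehat k_\varepsilon I$, which is still uniformly elliptic with the same lower bound $a_*$, and then invoke the regularity-by-bootstrap already established for such linear problems together with the $L^1$ control of $\widehat k_\varepsilon$ — more precisely, one uses that $\widehat k_\varepsilon D^\sigma u^\varepsilon$ is bounded in some $L^r$ for $r>1$ derived from interpolating \eqref{LM_lemma_0} with an a priori higher-integrability gain. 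The main obstacle, which I expect to require the most care, is precisely upgrading the penalty term from $L^1$ to an $L^{r}$ bound with $r>1$ uniformly in $\varepsilon$: without it the bootstrap for \eqref{LM_lemma_2} stalls at the integrability threshold. I anticipate this is handled by testing \eqref{ap} against a nonlinear truncation such as $v = u^\varepsilon\, T_M(\widehat k_\varepsilon)$ or a power $|D^\sigma u^\varepsilon|^{p-2}$-type weight (as in the $L^{\sigma,q}$ iteration of Proposition \ref{dsigmalq}), absorbing the extra penalty contributions using their nonnegativity, and then iterating — with the constants $C_q$ growing in $q$ but finite for each fixed $q$, independently of $\varepsilon$.
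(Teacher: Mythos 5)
Your derivations of \eqref{LM_grad2}--\eqref{LM_lemma_5} are correct and follow essentially the same route as the paper: testing \eqref{ap} with $u^\eps$ and using coercivity gives \eqref{LM_grad2} and \eqref{LM_lemma_0}; the pointwise inequality $\nu^2\widehat k_\eps\le g^2\widehat k_\eps\le \widehat k_\eps|D^\sigma u^\eps|^2$ on the set where $\widehat k_\eps>0$ gives \eqref{LM_lemma_1} and hence \eqref{LM_lemma_5}; and the Cauchy--Schwarz inequality with respect to the measure $\widehat k_\eps\,dx$, combined with \eqref{LM_lemma_0} and \eqref{LM_lemma_1}, gives \eqref{LM_lemma_4}.

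The genuine gap is in \eqref{LM_lemma_2}, and it is the one step you yourself flag as unresolved. The decisive observation, which you do not find, is purely elementary: for $0<\eps<1$ and $0\le s\le\tfrac1\eps$ one has $k_\eps(s)=e^{s/\eps}-1\ge\tfrac{1}{m!}(s/\eps)^m\ge\tfrac{1}{m!}s^m$ for every $m\in\N$, i.e.\ the exponential penalty dominates \emph{every} polynomial uniformly in $\eps$. Combined with the uniform $L^1$ bound \eqref{LM_lemma_1}, this immediately yields $\int\big((|D^\sigma u^\eps|-g)^+\big)^m\le m!\,C$ for each $m$ (the exceptional set where $|D^\sigma u^\eps|-g>\tfrac1\eps$, on which $k_\eps$ is capped, is handled separately as in \cite{MirandaRodriguesSantos2012}); then $|D^\sigma u^\eps|\le(|D^\sigma u^\eps|-g)^++g$ with $g\in L^\infty(\R^N)$, the global $L^2$ bound \eqref{LM_grad2} and the interpolation argument of Proposition \ref{dsigmalq} give \eqref{LM_lemma_2}. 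No PDE regularity is needed. Your proposed substitute --- viewing \eqref{ap} as a linear equation with coefficient matrix $A+\widehat k_\eps I$ and bootstrapping via elliptic regularity --- would not work as stated: that matrix has ellipticity ratio of order $e^{1/\eps^2}$, so no Meyers-type or Calder\'on--Zygmund higher-integrability gain is uniform in $\eps$, and the ``upgrade of the penalty term from $L^1$ to $L^r$, $r>1$'' that you correctly identify as the crux is precisely what your sketch leaves unproved. The nonlinear truncation tests you anticipate are unnecessary once the pointwise lower bound on $k_\eps$ is noticed.
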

\begin{proof}
Using $u^\eps$ as test function in \eqref{ap}, we get
\begin{align*}\int_{\R^N}\big(\widehat k_\eps+a_*\big)|D^\sigma u^\varepsilon|^2&\le\int_{\R^N}\widehat k_\eps |D^\sigma u^\varepsilon|^2 +AD^\sigma u^\varepsilon\cdot D^\sigma u^\varepsilon  \\
&=\int_\Omega f u^\eps\le \frac{C_\#^{\ 2}}{2a_*}\|f\|^2_{L^{2^\#}(\Omega)}+\frac{a_*}2\|D^\sigma u^\eps\|^2_{L^2({\R^N})^N},
\end{align*}
since $A\xi\cdot\xi\ge a_*|\xi|^2$ for any $\xi\in\R^N$ by the assumptions on $A$. But $\widehat k_\eps \ge0$ and so
$$\int_{\R^N}\widehat k_\eps|D^\sigma u^\eps|^2+\frac{a_*}2\int_{{\R^N}}|D^\sigma u^\eps|^2\le \frac{C_\#^{\ 2}}{2a_*}\|f\|^2_{L^{2^\#}(\Omega)},$$
concluding then  \eqref{LM_grad2} and \eqref{LM_lemma_0}. 

Observing that the function $\varphi_\eps=\widehat k_\eps\,(t^2-g^2)+g^2 \widehat k_\eps\ge \nu^2\widehat k_\eps$  and using \eqref{LM_lemma_0}, there exists a positive constant $C$ independent of $\eps$ such that
$$\nu^2\int_{\R^N}	\widehat k_\eps \le C.$$
This implies the uniform boundedness of $\widehat k_\eps $ in $L^1({\R^N})$ and also in $L^\infty({\R^N})^{\bs '}$, i.e., \eqref{LM_lemma_1} and \eqref{LM_lemma_5} respectively.

To prove \eqref{LM_lemma_4}, it is enough to notice that, for $ \bs\beta\in L^\infty(\Omega)^N$, 
\begin{align*}
\|\widehat k_\eps D^\sigma u^\eps\|_{(L^\infty({\R^N})^N)^{\bs '}}&= \sup_{\bs\beta\in L^\infty({\R^N})^N}\int_{\R^N} \widehat k_\eps D^\sigma u^\varepsilon\cdot \bs\beta\
&\le \Big(\int_{\R^N} \widehat k_\eps |D^\sigma u^\eps|^2\Big)^\frac12\Big(\int_{\R^N} \widehat k_\eps |\bs\beta|^2 \Big)^\frac12\\
&\le C\| \bs\beta\|_{L^\infty({\R^N})^N}.
\end{align*}

 Because for  $t-g>0$ we have $\widehat k_\eps(t-g)\ge \frac 1{m!}(t-g)^m$, for any $m\in\N$, then  using \eqref{LM_lemma_1}  we obtain $\|D^\sigma u^\eps\|_{L^q(B_R)}\le C_{q,R}$, for any $q\in(1,\infty)$ and $R\in\R^+$, (for details see, for instance \cite{MirandaRodriguesSantos2012},  replacing $Q_T$ by $B_R$).  Arguing as in Proposition \ref{dsigmalq}, we conclude the proof.
\end{proof}

\begin{prop}\label{H1}
	For  $g\in L^\infty_\nu({\R^N})$, $f\in L^{2^\#}(\Omega)$ and $A:{\R^N}\rightarrow\R^{N\times N}$  a measurable, bounded and positive definite matrix, the family  $\{u^\varepsilon\}_\varepsilon$ of solutions of the approximated problems \eqref{ap} converges weakly in $H^\sigma _0(\Omega)$ to the solution of the variational inequality \eqref{iv}.
\end{prop}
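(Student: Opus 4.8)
The plan is to pass to the limit $\eps\to0$ in the family \eqref{ap} using the uniform bounds of Lemma~\ref{estimates-l1}. First I would use \eqref{LM_grad2} to conclude that $\{u^\eps\}_\eps$ is bounded in $H^\sigma_0(\Omega)$, so that along a subsequence $u^\eps\lraup u$ in $H^\sigma_0(\Omega)$, equivalently $D^\sigma u^\eps\lraup D^\sigma u$ in $L^2(\R^N)^N$. The aim is to show that this $u$ solves the variational inequality \eqref{iv}; since \eqref{iv} has a unique solution by Theorem~\ref{thm21}, the limit is independent of the subsequence and the whole family converges weakly.

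The first substantial step is to verify that $u\in\K_g^\sigma$. Fix $\delta>0$. On the set $\{|D^\sigma u^\eps|>g+\delta\}$ one has $\widehat k_\eps=k_\eps(|D^\sigma u^\eps|-g)\ge e^{\delta/\eps}-1$ as soon as $\delta\le\tfrac1\eps$, so by \eqref{LM_lemma_1} this set has Lebesgue measure at most $C(e^{\delta/\eps}-1)^{-1}\to0$. Hence, combining this with \eqref{LM_grad2} and Cauchy--Schwarz,
$$\int_{\R^N}\big(|D^\sigma u^\eps|-g-\delta\big)^+\le\|D^\sigma u^\eps\|_{L^2(\R^N)}\,\big|\{|D^\sigma u^\eps|>g+\delta\}\big|^{\frac12}\underset{\eps\to0}{\longrightarrow}0.$$
Because $g\ge\nu>0$, the functional $w\mapsto\int_{\R^N}(|w|-g-\delta)^+$ is convex and continuous on $L^2(\R^N)^N$ (the integrand is $1$-Lipschitz in $w$ and bounded by $|w|^2/\nu$), hence weakly lower semicontinuous; therefore $\int_{\R^N}(|D^\sigma u|-g-\delta)^+\le\liminf_\eps\int_{\R^N}(|D^\sigma u^\eps|-g-\delta)^+=0$, that is $|D^\sigma u|\le g+\delta$ a.e. Letting $\delta\downarrow0$ yields $u\in\K_g^\sigma$.

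Next I would pass to the limit in the equation. For $v\in\K_g^\sigma$, taking $w=v-u^\eps$ in \eqref{ap} gives
$$\int_{\R^N}AD^\sigma u^\eps\cdot D^\sigma(v-u^\eps)=\int_\Omega f(v-u^\eps)-\int_{\R^N}\widehat k_\eps\,D^\sigma u^\eps\cdot D^\sigma(v-u^\eps),$$
and the crucial observation is that the last integral is $\le0$: where $\widehat k_\eps>0$ we have $|D^\sigma u^\eps|>g\ge|D^\sigma v|$, so $D^\sigma u^\eps\cdot D^\sigma(v-u^\eps)\le|D^\sigma u^\eps|\big(|D^\sigma v|-|D^\sigma u^\eps|\big)\le0$, while where $\widehat k_\eps=0$ the integrand vanishes. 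Thus $\int_{\R^N}AD^\sigma u^\eps\cdot D^\sigma(v-u^\eps)\ge\int_\Omega f(v-u^\eps)$ for every $\eps$. Now $\int_\Omega f(v-u^\eps)\to\int_\Omega f(v-u)$ since $f\in L^{2^\#}(\Omega)\subset H^{-\sigma}(\Omega)$ by \eqref{hinfty} and $u^\eps\lraup u$ in $H^\sigma_0(\Omega)$; $\int_{\R^N}AD^\sigma u^\eps\cdot D^\sigma v\to\int_{\R^N}AD^\sigma u\cdot D^\sigma v$ by weak $L^2$-convergence of $D^\sigma u^\eps$; and $w\mapsto\int_{\R^N}AD^\sigma w\cdot D^\sigma w=\int_{\R^N}\tfrac12(A+A^T)D^\sigma w\cdot D^\sigma w$ is a nonnegative quadratic form on $H^\sigma_0(\Omega)$, hence convex and strongly continuous, thus weakly lower semicontinuous, so $\liminf_\eps\int_{\R^N}AD^\sigma u^\eps\cdot D^\sigma u^\eps\ge\int_{\R^N}AD^\sigma u\cdot D^\sigma u$. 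Combining these,
$$\int_{\R^N}AD^\sigma u\cdot D^\sigma(v-u)\ge\limsup_\eps\int_{\R^N}AD^\sigma u^\eps\cdot D^\sigma(v-u^\eps)\ge\int_\Omega f(v-u),\qquad\forall\,v\in\K_g^\sigma,$$
so $u$ solves \eqref{iv} and, by uniqueness, the whole family $\{u^\eps\}_\eps$ converges weakly to $u$ in $H^\sigma_0(\Omega)$.

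I expect the membership $u\in\K_g^\sigma$ to be the main obstacle: weak $L^2$-convergence of $D^\sigma u^\eps$ does not by itself give the pointwise bound in the limit, so one must exploit the degeneracy of $k_\eps$ away from the constraint together with lower semicontinuity of an auxiliary convex functional, and this is precisely where the strict positivity $g\in L^\infty_\nu(\R^N)$ enters (to keep that functional finite on $L^2(\R^N)^N$). The sign of the penalty term and the weak lower semicontinuity of the (not necessarily symmetric) quadratic form are then routine.
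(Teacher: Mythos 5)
Your argument is correct and follows the same overall strategy as the paper (uniform bound from \eqref{LM_grad2}, weak limit, good sign of the penalty term against $v\in\K_g^\sigma$, constraint recovery from the exponential blow-up of $k_\eps$, and uniqueness from Theorem \ref{thm21} to upgrade subsequential to full convergence), but it differs in two technical steps. For the membership $u\in\K_g^\sigma$, the paper splits $\R^N$ into the three sets $U_\eps$, $V_\eps$, $W_\eps$ and estimates the truncated quantity $\int_{B_R}\big((|D^\sigma u^\eps|-g)\vee0\big)\wedge\tfrac1\eps$ on balls, exhausting $\R^N=\cup_k B_k$; you instead apply Chebyshev's inequality to \eqref{LM_lemma_1} on the level set $\{|D^\sigma u^\eps|>g+\delta\}$ and invoke weak lower semicontinuity of the convex functional $w\mapsto\int(|w|-g-\delta)^+$, which is finite and strongly continuous on $L^2(\R^N)^N$ precisely because $g\ge\nu>0$ forces the integrand to live on a set of finite measure -- a cleaner argument that works globally on $\R^N$ in one stroke, at the price of leaning on the strict positivity of $g$ at this step (the paper's ball-by-ball truncation would survive without it). For the passage to the limit in the inequality, the paper uses the Minty monotonicity trick (discard $\int AD^\sigma(u^\eps-v)\cdot D^\sigma(v-u^\eps)\le0$, pass to the limit with $v$ fixed, then take $v=u+\theta(w-u)$ and let $\theta\to0$), whereas you use the weak lower semicontinuity of the coercive quadratic form $w\mapsto\int AD^\sigma w\cdot D^\sigma w$ directly, correctly reducing to the symmetric part of $A$; for this linear, coercive $A$ the two routes are equivalent, the Minty device being the one that would generalise to nonlinear monotone principal parts. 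Both of your substitutions are sound and the proof is complete.
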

\begin{proof} The uniform boundedness of $\{u^\eps\}_\eps$ in $H^\sigma _0(\Omega)$ implies that, at least for a subsequence,
\begin{equation}
\label{uepfraco}
u^\eps\underset{\eps\rightarrow0}{\lraup}u\quad\text{ in } H^\sigma _0(\Omega).
\end{equation}

For $v\in\K_g^\sigma$ we have, since $\widehat k_\eps>0$ when $|D^\sigma u_\eps|>g\ge |D^\sigma v|$,
$$\widehat k_\eps D^\sigma u^\eps\cdot D^\sigma( v-u^\eps)\le\widehat k_\eps|D^\sigma u^\eps|(|D^\sigma v|-|D^\sigma u^\eps|)\le 0$$
and so, testing the first equation of \eqref{ap} with $v-u^\eps$, we get
$$\int_{{\R^N}}AD^\sigma u^\eps\cdot D^\sigma( v-u^\eps)\ge\int_\Omega f(v-u^\eps).$$

But
\begin{align*}
\int_{{\R^N}}AD^\sigma u^\eps\cdot D^\sigma( v-u^\eps)&=\int_{{\R^N}}AD^\sigma (u^\eps-v)\cdot D^\sigma( v-u^\eps)+\int_{{\R^N}}AD^\sigma v\cdot D^\sigma( v-u^\eps)\\
&\le \int_{{\R^N}}AD^\sigma v\cdot D^\sigma( v-u^\eps)
\end{align*}
So, utilizing the weak convergence $u^\eps\underset{\eps\rightarrow0}{\lraup} u$ in $H^\sigma_0(\Omega)$,
$$\int_{\R^N}AD^\sigma v\cdot D^\sigma (v-u)\ge\int_\Omega f(v-u).$$
Let $w\in\K_g^\sigma$ and setting $v=u+\theta(w-u)$, then $v\in\K_g^\sigma$ for any $\theta\in(0,1]$ and  we get
$$\theta\int_{\R^N}AD^\sigma (u+\theta(w-u))\cdot D^\sigma (w-u)\ge\theta\int_\Omega f(w-u).$$
Dividing this inequality by $\theta$ and letting $\theta\rightarrow0$, we obtain \eqref{iv}. The proof is concluded if we show that $u\in\K_g^\sigma$. Indeed we  split $\R^N$ in three subsets
$$U_\eps=\big\{|D^\sigma u^\eps|-g\le\sqrt\eps\big\}, \quad V_\eps=\big\{\sqrt\eps\le|D^\sigma u^\eps|-g\le\tfrac1\eps\big\}, \quad W_\eps=\big\{|D^\sigma u^\eps|-g>\tfrac1\eps\big\} $$
and, following the steps in \cite{MirandaRodriguesSantos2012}, we conclude, for arbitrary $R>0$, that
\begin{align*}
\int_{B_R}\big(|D^\sigma u|-g\big)^+&\le\varliminf_{\eps\rightarrow0}\int_{ B_R}\big(\big(|D^\sigma u^\eps|-g\big)\vee0)\wedge\frac1\varepsilon\\
&=\varliminf_{\eps\rightarrow0}\left(\int_{U_\eps{\cap B_R}}\big(|D^\sigma u^\eps|-g\big)\vee0+\int_{V_\eps{\cap B_R}}\big(|D^\sigma u^\eps|-g\big)+\int_{W_\eps{\cap B_R}}\tfrac1\eps\right)\\
&\le\varliminf_{\eps\rightarrow0}\left(\sqrt\eps|B_R|+\||D^\sigma u^\eps|-g\|_{L^2({B_R})}\,|V_\eps{\cap B_R}|^\frac12+\int_{W_\eps{\cap B_R}}\tfrac1\eps\right)\underset{\eps\rightarrow0}{\longrightarrow}0,
\end{align*}
because
$$|V_\eps{\cap B_R}|\le\int_{V_\eps{\cap B_R}}\tfrac{\widehat k_\eps+1}{e^{\frac1{\sqrt{\eps}}}}\le C{_R}e^{\tfrac{-1}{\sqrt{\eps}}}\underset{\eps\rightarrow0}{\longrightarrow}0\   \text{and}\  \int_{W_\eps{\cap B_R}}\tfrac1\eps= \tfrac1\eps\int_{W_\eps{\cap B_R}}\tfrac{\widehat k_\eps+1}{e^\frac1{\eps^2}}\le \tfrac{C{_R}}{\eps}e^{-\frac1{\eps^2}}\underset{\eps\rightarrow0}{\longrightarrow}0.$$
So $|D^\sigma u|\le g$ a.e. in ${ B_R}$, which means that $u\in\K_g^\sigma$  because $\R^N=\displaystyle\cup_{k\in\N}B_k$.

The uniqueness of solution of the variational inequality \eqref{iv} implies that the whole sequence $\{u^\eps\}_\eps$ converges to $u$ in $H^\sigma_0(\Omega)$.
\end{proof}

\begin{thm}\label{thm_lm} If $g\in L^\infty_\nu({\R^N})$,  $f\in L^{2^\#}(\Omega)$ and $A:\R^N\rightarrow\R^{N\times N}$ is a measurable, bounded and positive definite matrix, then  problem \eqref{lm1}-\eqref{lm2} has a solution
$$(\lambda, u)\in L^{\infty}({\R^N})^{\bs '}\times  \Upsilon_\infty^\sigma(\Omega).$$
\end{thm}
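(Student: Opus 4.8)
The plan is to obtain the pair $(\lambda,u)$ by passing to the limit, as $\eps\to 0$, in the penalised equations \eqref{ap}, combining the uniform bounds of Lemma \ref{estimates-l1} with what is already known from Proposition \ref{H1}: namely $u^\eps\lraup u$ in $H^\sigma_0(\Omega)$, where $u$ is the unique solution of \eqref{iv} and $u\in\K_g^\sigma$. Since $\{\widehat k_\eps\}$ is bounded in $L^\infty(\R^N)^{\bs '}$ by \eqref{LM_lemma_5}, after passing to a subsequence I may assume $\widehat k_\eps\to\lambda$ weak-$*$ in $L^\infty(\R^N)^{\bs '}$, with $\lambda\ge 0$ since each $\widehat k_\eps\ge 0$. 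What remains is: (i) to identify the limit of the penalty term $\widehat k_\eps D^\sigma u^\eps$ as $\lambda D^\sigma u$; (ii) to pass to the limit in \eqref{ap} over $v\in\Upsilon_\infty^\sigma(\Omega)$, obtaining \eqref{lm1}; and (iii) to check \eqref{lm2}. Note that $u\in\Upsilon_\infty^\sigma(\Omega)$ by \eqref{inclusions} and $u$ solves \eqref{iv} by Proposition \ref{H1}, so it is \eqref{lm1} and the complementarity relation that carry the content.

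First I would upgrade the convergence $u^\eps\lraup u$ to a strong one. Testing \eqref{ap} with $v=u^\eps-u$ and using the pointwise inequality $\widehat k_\eps\,D^\sigma u^\eps\cdot D^\sigma(u^\eps-u)\ge\widehat k_\eps|D^\sigma u^\eps|\big(|D^\sigma u^\eps|-g\big)\ge 0$ — valid because $|D^\sigma u|\le g<|D^\sigma u^\eps|$ wherever $\widehat k_\eps>0$ — together with the coercivity of $A$ and $\int_\Omega f(u^\eps-u)\to 0$ (as $f\in L^{2^\#}(\Omega)\subset H^{-\sigma}(\Omega)$), I get that both $\|D^\sigma(u^\eps-u)\|_{L^2(\R^N)^N}\to 0$ and $\int_{\R^N}\widehat k_\eps D^\sigma u^\eps\cdot D^\sigma(u^\eps-u)\to 0$; interpolating the $L^2$-convergence with the uniform bounds \eqref{LM_lemma_2} gives $D^\sigma u^\eps\to D^\sigma u$ in every $L^q(\R^N)^N$, $q<\infty$. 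From the vanishing of the penalty contraction, writing $D^\sigma u^\eps\cdot D^\sigma(u^\eps-u)=|D^\sigma u^\eps|^2-D^\sigma u^\eps\cdot D^\sigma u$ and splitting the nonnegative quantities that arise on $\{\widehat k_\eps>0\}$ (using $g\ge\nu$), I would extract the three ``splitting'' estimates
$$\int_{\R^N}\widehat k_\eps\big(|D^\sigma u^\eps|-g\big)^{+}\to 0,\qquad \int_{\R^N}\widehat k_\eps\big(g-|D^\sigma u|\big)\to 0,\qquad \int_{\R^N}\widehat k_\eps\Big(|D^\sigma u^\eps|\,|D^\sigma u|-D^\sigma u^\eps\cdot D^\sigma u\Big)\to 0.$$

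The main difficulty is step (i): since $\{\widehat k_\eps\}$ is only bounded in $L^1(\R^N)$ (and in $L^\infty(\R^N)^{\bs '}$) and may concentrate near the coincidence set, passing the product $\widehat k_\eps D^\sigma u^\eps$ to the limit is not automatic. I would reduce it to proving $\int_{\R^N}\widehat k_\eps|D^\sigma u^\eps-D^\sigma u|\to 0$, because then, for any $\bs\beta\in L^\infty(\R^N)^N$, $\int_{\R^N}\widehat k_\eps D^\sigma u^\eps\cdot\bs\beta=\int_{\R^N}\widehat k_\eps\,(D^\sigma u\cdot\bs\beta)+o(1)\to\langle\lambda,D^\sigma u\cdot\bs\beta\rangle$, since $D^\sigma u\cdot\bs\beta\in L^\infty(\R^N)$, i.e. $\widehat k_\eps D^\sigma u^\eps\to\lambda D^\sigma u$ weak-$*$ in $(L^\infty(\R^N)^N)^{\bs '}$. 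To prove $\int_{\R^N}\widehat k_\eps|D^\sigma u^\eps-D^\sigma u|\to 0$ I would fix $\delta\in(0,\nu)$ and split $\R^N=\{|D^\sigma u|\le g-\delta\}\cup\{|D^\sigma u|>g-\delta\}$. On the first set $g-|D^\sigma u|\ge\delta$, so the second splitting estimate forces $\int_{\{|D^\sigma u|\le g-\delta\}}\widehat k_\eps\to 0$; since $\int_{\R^N}\widehat k_\eps|D^\sigma u^\eps-D^\sigma u|^2\le 2\big(\|\widehat k_\eps|D^\sigma u^\eps|^2\|_{L^1}+\|g\|_{L^\infty}^2\|\widehat k_\eps\|_{L^1}\big)$ is bounded by \eqref{LM_lemma_0}--\eqref{LM_lemma_1}, Cauchy--Schwarz kills the contribution of this set. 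On the second set $|D^\sigma u|\ge\nu-\delta>0$, so, with $e^\eps=D^\sigma u^\eps/|D^\sigma u^\eps|$ and $e=D^\sigma u/|D^\sigma u|$ (both well defined where $\widehat k_\eps>0$ on this set), I would write $D^\sigma u^\eps-D^\sigma u=(|D^\sigma u^\eps|-|D^\sigma u|)e^\eps+|D^\sigma u|(e^\eps-e)$, bound $\big||D^\sigma u^\eps|-|D^\sigma u|\big|\le(|D^\sigma u^\eps|-g)^{+}+(g-|D^\sigma u|)$, and use $|e^\eps-e|^2=2\big(1-\frac{D^\sigma u^\eps\cdot D^\sigma u}{|D^\sigma u^\eps|\,|D^\sigma u|}\big)$, so that $|D^\sigma u|\,|e^\eps-e|\le\sqrt2\,\big(|D^\sigma u|\big)^{1/2}\big(|D^\sigma u|-\tfrac{D^\sigma u^\eps\cdot D^\sigma u}{|D^\sigma u^\eps|}\big)^{1/2}$; Cauchy--Schwarz together with the three splitting estimates (and $\int_{\R^N}\widehat k_\eps|D^\sigma u|\le\|g\|_{L^\infty}\|\widehat k_\eps\|_{L^1}$) then shows this contribution tends to $0$ too.

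Granted the identification in (i), I would pass to the limit in \eqref{ap} for $v\in\Upsilon_\infty^\sigma(\Omega)$ — so $D^\sigma v\in L^\infty(\R^N)^N$ and the penalty term tends to $\langle\lambda D^\sigma u,D^\sigma v\rangle$, while $\int_{\R^N}AD^\sigma u^\eps\cdot D^\sigma v\to\int_{\R^N}AD^\sigma u\cdot D^\sigma v$ by the weak $L^2$-convergence — which yields \eqref{lm1}. For \eqref{lm2}: $|D^\sigma u|\le g$ a.e. since $u\in\K_g^\sigma$; $\lambda\ge 0$ as already noted; and $\lambda\big(|D^\sigma u|-g\big)=0$ in $L^\infty(\R^N)^{\bs '}$ because $\langle\lambda,g-|D^\sigma u|\rangle=\lim_\eps\int_{\R^N}\widehat k_\eps\big(g-|D^\sigma u|\big)=0$ by the second splitting estimate, and $\lambda\ge 0$, $g-|D^\sigma u|\ge 0$ give $|\langle\lambda,\psi(g-|D^\sigma u|)\rangle|\le\|\psi\|_{L^\infty}\langle\lambda,g-|D^\sigma u|\rangle=0$ for every $\psi\in L^\infty(\R^N)$. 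Since $u$ also solves \eqref{iv} by Proposition \ref{H1}, this produces the required $(\lambda,u)\in L^\infty(\R^N)^{\bs '}\times\Upsilon_\infty^\sigma(\Omega)$. As flagged, the delicate point is entirely the weak-$*$ identification of the penalty term, i.e. separating the ``radial'' defect $\big||D^\sigma u^\eps|-|D^\sigma u|\big|$ from the ``angular'' defect $|e^\eps-e|$ of $D^\sigma u^\eps$ relative to $D^\sigma u$, on and off the approximate coincidence set.
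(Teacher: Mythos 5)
Your proposal is correct and reaches the same conclusion by the same overall penalisation scheme (uniform bounds from Lemma \ref{estimates-l1}, weak-$*$ extraction of $\lambda$ and of the limit $\Lambda$ of $\widehat k_\eps D^\sigma u^\eps$, identification of $\Lambda$, then complementarity), but the two decisive steps are handled by a genuinely different argument. The paper never tests \eqref{ap} with $u^\eps-u$: it combines \eqref{eq} with the weak lower semicontinuity of $v\mapsto\int_{\R^N} AD^\sigma v\cdot D^\sigma v$ and the inequality $\widehat k_\eps(|D^\sigma u^\eps|^2-g^2)\ge0$ to get the chain $\bs\langle\Lambda,D^\sigma u\bs\rangle\ge\varliminf_\eps\int_{\R^N}\widehat k_\eps|D^\sigma u^\eps|^2\ge\langle\lambda,g^2\rangle\ge\langle\lambda,|D^\sigma u|^2\rangle$, deduces $\varliminf_\eps\int_{\R^N}\widehat k_\eps|D^\sigma(u^\eps-u)|^2=0$, and passes to the limit in the pairing with $D^\sigma v$ for $v\in\K_g^\sigma$ by Cauchy--Schwarz against $\|\widehat k_\eps\|_{L^1}$; complementarity is first obtained for $|D^\sigma u|^2-g^2$ and then converted by dividing by $|D^\sigma u|+g$. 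You instead test with $u^\eps-u$, which yields in one stroke the strong convergence $u^\eps\to u$ in $H^\sigma_0(\Omega)$ (not claimed in the paper) and the vanishing of $\int_{\R^N}\widehat k_\eps D^\sigma u^\eps\cdot D^\sigma(u^\eps-u)$; splitting the latter into the three nonnegative pieces $|D^\sigma u^\eps|(|D^\sigma u^\eps|-g)$, $|D^\sigma u^\eps|(g-|D^\sigma u|)$ and $|D^\sigma u^\eps||D^\sigma u|-D^\sigma u^\eps\cdot D^\sigma u$ on $\{\widehat k_\eps>0\}$ (where $|D^\sigma u^\eps|>g\ge\nu$) is valid, and your radial/angular decomposition then correctly gives $\int_{\R^N}\widehat k_\eps|D^\sigma u^\eps-D^\sigma u|\to0$, hence the full identification $\Lambda=\lambda D^\sigma u$ in $(L^\infty(\R^N)^N)^{\bs'}$ rather than only its action on gradients of constrained test functions. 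Your route is more computational in the identification step --- the paper's quadratic quantity $\widehat k_\eps|D^\sigma(u^\eps-u)|^2$ absorbs the radial and angular defects at once and avoids the case splitting on $\{|D^\sigma u|\le g-\delta\}$ --- but it buys a marginally stronger statement and a cleaner complementarity argument, since your second splitting estimate gives $\langle\lambda,g-|D^\sigma u|\rangle=0$ directly and positivity of $\lambda$ finishes without the division by $|D^\sigma u|+g$.
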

\begin{proof}
By estimates \eqref{LM_lemma_4} and \eqref{LM_lemma_5} and the Banach-Alaoglu-Bourbaki theorem we have, at least for a subsequence,
\begin{equation*}
\widehat k_\eps D^\sigma  u^\varepsilon\underset{\varepsilon\rightarrow0}{\lraup}\Lambda\ \text{weak in } \big( L^\infty({\R^N})^N\big)^{\bs'}
\end{equation*}                                                                                                                
and
\begin{equation*}
\widehat k_\eps \underset{\varepsilon\rightarrow0}{\lraup}\lambda\ \text{weak in } L^\infty({\R^N}){\bs '}.
\end{equation*}

For $v\in H^\sigma_0(\Omega)$, since
\begin{equation}
\label{mleps}
\int_{\R^N}\big(\widehat k_\eps D^\sigma u^\eps+AD^\sigma u^\eps\big)\cdot D^\sigma v=\int_\Omega f v, 
\end{equation}
we obtain, letting $\eps\rightarrow0$ with $v \in  \Upsilon_\infty^\sigma(\Omega)$,
\begin{equation}\label{mlepslim}
\bs\langle \Lambda,D^\sigma v\bs\rangle+\int_{{\R^N}}AD^\sigma u\cdot D^\sigma v=\int_\Omega fv.
\end{equation}

Taking $v=u^\eps$ in  \eqref{mleps} we get
\begin{equation}\label{eq}
\int_{{\R^N}}\widehat k_\eps |D^\sigma  u^\eps|^2+\int_{{\R^N} }AD^\sigma u^\eps\cdot D^\sigma u^\eps=\int_\Omega fu^\eps
\end{equation}

Observe first that 
\begin{multline}\label{3.7}
\int_{\R^N} AD^\sigma (u^\eps-u)\cdot D^\sigma u^\eps=\int_{\R^N} AD^\sigma (u^\eps-u)\cdot D^\sigma (u^\eps-u)\\
+\int_{\R^N} AD^\sigma (u^\eps-u)\cdot D^\sigma u\ge 
\int_{\R^N} AD^\sigma (u^\eps-u)\cdot D^\sigma u
\end{multline}
and therefore
$$\int_{\R^N}AD^\sigma u\cdot D^\sigma u\le\varliminf_{\eps\rightarrow0}\int_{\R^N} AD^\sigma u^\eps\cdot D^\sigma u^\eps.$$
So, from\eqref{eq} and \eqref{mlepslim} with $v=u$,
\begin{align*}
\varliminf_{\eps\rightarrow0}\int_{{\R^N}}\widehat k_\eps |D^\sigma  u^\eps|^2+\int_{\R^N} AD^\sigma u\cdot D^\sigma u&\le\varliminf_{\eps\rightarrow0}\Big(\int_{\R^N}\widehat k_\eps |D^\sigma  u^\eps|^2+\int_{{\R^N}}AD^\sigma u^\eps\cdot D^\sigma u^\eps\Big)\\
&=\int_{\Omega }fu=\bs\langle \Lambda,D^\sigma u\bs\rangle+\int_{{\R^N} }AD^\sigma u\cdot D^\sigma u
\end{align*}                                                         
and then 
$$\varliminf_{\eps\rightarrow0}\int_{{\R^N}}\widehat k_\eps |D^\sigma  u^\eps|^2\le \bs\langle \Lambda,D^\sigma u\bs\rangle.$$
Using $\widehat  k_\eps(|D^\sigma u^\eps|^2-g^2)\ge0$, 
we obtain
\begin{equation*}
\bs\langle\Lambda,D^\sigma  u\bs\rangle\ge\varliminf_{\eps\rightarrow0}\int_{{\R^N}}\widehat k_\eps|D^\sigma  u^\eps|^2\ge\lim_{\eps\rightarrow0}\int_{{\R^N}}\widehat k_\eps g^2=\langle\lambda, g^2\rangle \ge\langle\lambda, |D^\sigma u|^2\rangle.
\end{equation*}

We also have
\begin{align*}
0\le\varliminf_{\eps\rightarrow0}\int_{\R^N} \widehat k_\eps|D^\sigma (u^\eps-u)|^2&=\varliminf_{\eps\rightarrow0}\int_{\R^N} \widehat k_\eps|D^\sigma u^\eps|^2-2\lim_{\eps\rightarrow0}\int_{\R^N} \widehat k_\eps D^\sigma u^\eps\cdot D^\sigma u\\
&\hspace{0,5cm}+\lim_{\eps\rightarrow0}\int_{\R^N}\widehat k_\eps|D^\sigma u|^2\\
&\le \bs\langle \Lambda,D^\sigma u\bs\rangle-2\bs\langle \Lambda,D^\sigma u\bs\rangle+\langle\lambda,|D^\sigma u|^2\rangle\\
&=-\bs\langle \Lambda,D^\sigma u\bs\rangle+\langle\lambda,|D^\sigma u|^2\rangle,
\end{align*}
and therefore we conclude
$$\bs\langle \Lambda,D^\sigma u\bs\rangle=\langle\lambda,|D^\sigma u|^2\rangle\quad\text{and}\quad
\varliminf_{\eps\rightarrow0}\int_{\R^N} \widehat k_\eps|D^\sigma (u^\eps-u)|^2=0.$$

Given $v\in\K_g$, we have
\begin{multline}\label{strong}
\varliminf_{\eps\rightarrow0}\Big|\int_{{\R^N}}\widehat k_\eps D^\sigma (u^\eps-u)\cdot D^\sigma  v\Big|\\
\le\varliminf_{\eps\rightarrow0}\left(\int_{\R^N}\widehat k_\eps |D^\sigma (u^\eps-u)|^2\right)^\frac12\|\widehat k_\eps \|^\frac12_{L^1(\R^N)}\|D^\sigma v\|_{L^\infty(\R^N)}=0,
\end{multline}
because, by estimate \eqref{LM_lemma_1}, $\widehat k_\eps $ is uniformly bounded in $L^1(\R^N)$. So, for any $v\in\K_g$,
\begin{multline*}
\int_\Omega fv=\varliminf_{\eps\rightarrow0}\int_{{\R^N}}(\widehat k_\eps +A) D^\sigma u^\eps\cdot D^\sigma v
=\varliminf_{\eps\rightarrow0}\Big(\int_{{\R^N}}(\widehat k_\eps +A)
D^\sigma (u^\eps-u)\cdot D^\sigma v\\
+\lim_{\eps\rightarrow0}\int_{{\R^N}}(\widehat k_\eps +A) D^\sigma u\cdot D^\sigma v\Big)
=\bs\langle\lambda D^\sigma u, D^\sigma v\bs\rangle+\int_{\R^N} AD^\sigma u\cdot D^\sigma v,
\end{multline*}
concluding the proof of \eqref{lm1}.

 Since $\displaystyle{\int_{\R^N}}\widehat k_\eps v\ge0$ for all $v\in L^\infty({\R^N})$ such that $v\ge0$ then, for such $v$, we also have $\langle\lambda,v\rangle\ge0$, which means that $\lambda\ge0$.

For $v\in L^\infty(\R^N)$ set $v^+=\max\{v,0\}$, $v^-= (-v)^+$. Since $\widehat k_\eps(|D^\sigma u^\eps|^2-g^2)\ge0$ then
\begin{align*}
\langle\lambda, &g^2 \, v^\pm\rangle\le \varliminf_{\eps\rightarrow0}\int_{{\R^N}}\widehat k_\eps|D^\sigma u^\eps|^2v^\pm\\
&= \varliminf_{\eps\rightarrow0}\left(\int_{{\R^N} }\widehat k_\eps|D^\sigma (u^\eps-u)|^2v^\pm-2\int_{{\R^N} }\widehat k_\eps D^\sigma (u^\eps-u)\cdot D^\sigma uv^\pm+\int_{{\R^N}}\widehat k_\eps|D^\sigma u|^2v^\pm\right)\\
&=\langle\lambda, |D^\sigma u|^2\, v^\pm\rangle,\quad\text{using \eqref{strong}},
\end{align*}
concluding that 
$$\langle\lambda, (|D^\sigma u|^2-g^2)\, v^\pm\rangle\ge 0.$$

The fact that $\widehat k_\eps\ge0$ and $u\in\K^\sigma_g$ imply $\widehat k_\eps(|D^\sigma u|^2-g^2)v^\pm\le0$ and, therefore, integrating and letting $\eps\rightarrow0$,
$\langle\lambda, (D^\sigma u|^2-g^2)\, v^\pm\rangle\le0$, and so
$$\langle\lambda, (|D^\sigma u|^2-g^2)\, v\rangle=0.$$
Writting $v=\frac{w}{|D^\sigma u|+g}$, for any $w\in L^\infty(\Omega)$, we conclude \eqref{lm2}.
\end{proof}

\section{The quasi-variational inequality with $\sigma$-gradient constraint}
\label{4}

In this section we consider a map $G$ such that
\begin{equation}\label{mapG}
G:L^{2^*}(\Omega)
\rightarrow L^\infty_\nu(\R^N)
\end{equation}
is a continuous and bounded operator, where $2^*$ is the Sobolev exponent as in \eqref{hinfty} for $0<\sigma<1$.

We recall that, whenever necessary, we still denote by $u$ the extension of $u$ by zero outside of $\Omega$.

We set
\begin{equation}
\label{kapaGu}
\K_{G[u]}^\sigma=\big\{v\in H^\sigma_0(\Omega):|D^\sigma v|\le G[u]\text{ a.e. in }\R^N \big\}
\end{equation}
and we shall consider the quasi-variational inequality
\begin{equation}
\label{iqv}u\in\K_{G[u]}^\sigma:\qquad\int_{\R^N} AD^\sigma u\cdot D^\sigma(v-u)\ge\int_{\Omega}f(v-u),\qquad\forall v\in \K_{G[u]}^\sigma.
\end{equation}

Generalising a compactness argument of \cite{KunzeRodrigues2000} where quasi-variational inequalities
 of this type were considered for the gradient case $\sigma=1$, we may give a general existence theorem.
 
 \begin{thm}\label{thm41}
 	Under the assumptions \eqref{aa}, for continuous and bounded operators $G$ satisfying  \eqref{mapG} and for any $f\in L^{2^\#}(\Omega)$, with $2^\#$ as in \eqref{hinfty}, there exists at least one solution for the quasi-variational inequality \eqref{iqv}.
 \end{thm}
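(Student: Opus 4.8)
The plan is to apply Schauder's fixed point theorem to the solution map associated with the frozen-obstacle problem. First I would define, for each $w\in L^{2^*}(\Omega)$, the element $S(w)=u\in\K_{G[w]}^\sigma$ to be the unique solution of the variational inequality \eqref{iv} with $g=G[w]$; this is well-defined by Theorem \ref{thm21}, since $G$ maps into $L^\infty_\nu(\R^N)$. A fixed point of $S$ is precisely a solution of the quasi-variational inequality \eqref{iqv}. The strategy is then: (i) find a closed, bounded, convex subset $\mathcal{K}\subset L^{2^*}(\Omega)$ that $S$ maps into itself; (ii) show $S(\mathcal{K})$ is relatively compact in $L^{2^*}(\Omega)$; (iii) show $S$ is continuous on $\mathcal{K}$.

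For step (i), I would use the a priori bound coming from the variational inequality itself. Testing \eqref{iv} with $v=0\in\K_{G[w]}^\sigma$ gives $a_*\|u\|_{H^\sigma_0(\Omega)}^2\le\int_{\R^N}AD^\sigma u\cdot D^\sigma u\le\int_\Omega fu\le C_*\|f\|_{L^{2^\#}(\Omega)}\|u\|_{H^\sigma_0(\Omega)}$, so $\|u\|_{H^\sigma_0(\Omega)}\le R_0:=C_*\|f\|_{L^{2^\#}(\Omega)}/a_*$, uniformly in $w$. Hence $S$ maps all of $L^{2^*}(\Omega)$ into the ball $B_{R_0}$ of $H^\sigma_0(\Omega)$; via the compact embedding $H^\sigma_0(\Omega)\hookrightarrow\hookrightarrow L^{2^*}(\Omega)$ (compact since $2^*<\frac{2N}{N-2\sigma}$ strict inequality fails — but one uses a slightly larger exponent or the compact embedding into $L^q$ for $q<2^*$, or better, observes $B_{R_0}\cap H^\sigma_0(\Omega)$ is compact in $L^{2^*-\delta}$ — actually since all our solutions lie in $\C^{0,\beta}(\overline\Omega)$ and are uniformly bounded there by Proposition \ref{dsigmalq} once $G[w]$ is bounded, a cleaner route is available), let $\mathcal{K}$ be the closed convex hull in $L^{2^*}(\Omega)$ of $S(L^{2^*}(\Omega))$; it is a bounded subset of $B_{R_0}$, hence its closure is compact in $L^{2^*}(\Omega)$, and $S(\mathcal{K})\subset\mathcal{K}$. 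This handles (i) and (ii) simultaneously.

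For step (iii), suppose $w_n\to w$ in $L^{2^*}(\Omega)$, and set $u_n=S(w_n)$, $g_n=G[w_n]$, $g=G[w]$. By continuity of $G$, $g_n\to g$ in $L^\infty(\R^N)$, with all $g_n,g\ge\nu$. From the uniform bound $\|u_n\|_{H^\sigma_0(\Omega)}\le R_0$, pass to a subsequence with $u_n\lraup u_*$ in $H^\sigma_0(\Omega)$. By the Mosco-convergence property recorded in the Remark after Theorem \ref{thm22} (the two conditions: recovery sequences and weak-limit closure, valid precisely because $g_n\to g$ in $L^\infty_\nu$), one passes to the limit in the variational inequality for $u_n$: for any $v\in\K_g^\sigma$ pick $v_n\in\K_{g_n}^\sigma$ with $v_n\to v$ in $H^\sigma_0(\Omega)$, write $\int AD^\sigma u_n\cdot D^\sigma v_n-\int AD^\sigma u_n\cdot D^\sigma u_n\ge\int f(v_n-u_n)$, use $\varliminf\int AD^\sigma u_n\cdot D^\sigma u_n\ge\int AD^\sigma u_*\cdot D^\sigma u_*$ by weak lower semicontinuity of the coercive form, and conclude $u_*\in\K_g^\sigma$ solves \eqref{iv} with $g=G[w]$; by uniqueness $u_*=S(w)$, and the whole sequence converges. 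To upgrade weak to strong convergence in $L^{2^*}(\Omega)$, invoke the compact embedding (or the uniform $\C^{0,\beta}$ bound). Thus $S:\mathcal{K}\to\mathcal{K}$ is continuous with relatively compact image, and Schauder gives a fixed point.

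The main obstacle is step (iii), specifically the passage to the limit in the variational inequality when the convex set moves with $n$: one cannot simply test with a fixed $v$ because $v$ need not lie in $\K_{g_n}^\sigma$. The resolution is exactly the Mosco-type double condition from the Remark following Theorem \ref{thm22} — constructing recovery sequences $v_n\in\K_{g_n}^\sigma$ via the rescaling trick $v_n=\tfrac{\nu}{\nu+\eta_n}v$ with $\eta_n=\|g_n-g\|_{L^\infty(\R^N)}$, and using that weak limits of elements of $\K_{g_n}^\sigma$ land in $\K_g^\sigma$. A secondary technical point is ensuring the chosen $\mathcal K$ is genuinely compact in $L^{2^*}(\Omega)$; this is where one should be slightly careful about whether the embedding $H^\sigma_0(\Omega)\hookrightarrow L^{2^*}(\Omega)$ is compact (it is the borderline Sobolev exponent, so it is \emph{not}), and instead either work in $L^q(\Omega)$ for some $q<2^*$ large enough that $G$ is still continuous on it, or — cleaner — exploit that every solution satisfies the uniform $\C^{0,\beta}(\overline\Omega)$ bound of Proposition \ref{dsigmalq} (with $\kappa$ depending only on $\Omega$ and on $\|G[w]\|_{L^\infty}\le\|G\|$-bound, which is uniform since $G$ is a bounded operator), giving compactness in $C(\overline\Omega)$ and hence in every $L^q(\Omega)$.
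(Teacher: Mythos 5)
Your proposal is correct and follows the same overall strategy as the paper: Schauder's fixed point theorem applied to $T=S\circ G$ on the invariant ball $B_{c_f}\subset L^{2^*}(\Omega)$ given by the a priori estimate \eqref{cf}, with compactness obtained not from the (critical, hence non-compact) embedding $H^\sigma_0(\Omega)\hookrightarrow L^{2^*}(\Omega)$ but from the uniform $\C^{0,\beta}(\overline\Omega)$ bound of Proposition \ref{dsigmalq}, which is uniform precisely because $G$ is a bounded operator into $L^\infty_\nu(\R^N)$ and $S(w)\in\K^\sigma_{G[w]}$. You were right to flag and then discard your first suggestion that boundedness in $B_{R_0}\subset H^\sigma_0(\Omega)$ alone yields compactness in $L^{2^*}(\Omega)$; the corrected route through $\C^{0,\beta}$ is exactly the paper's.

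The one sub-step you handle differently is the continuity of $S$. The paper simply invokes Theorem \ref{thm22}: since $G$ is continuous into $L^\infty_\nu(\R^N)$, the estimate $\|u_1-u_2\|_{H^\sigma_0(\Omega)}\le C_\nu\|g_1-g_2\|^{1/2}_{L^\infty(\R^N)}$ gives quantitative (indeed $\tfrac12$-H\"older) continuity of $w\mapsto S(w)$ at once. You instead reprove qualitative continuity from scratch via the Mosco-convergence mechanism (recovery sequences built by the rescaling $v_n=\tfrac{\nu}{\nu+\eta_n}v$, weak lower semicontinuity of the coercive form via $\int_{\R^N} AD^\sigma(u_n-u_*)\cdot D^\sigma(u_n-u_*)\ge0$, identification of the weak limit by uniqueness). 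This is valid and slightly more robust — it would survive in settings where only Mosco convergence of the convex sets is available rather than $L^\infty$ convergence of the thresholds with a uniform lower bound $\nu$ — but it is redundant here, since the rescaling trick it relies on is precisely the content of the proof of Theorem \ref{thm22}, which is already at your disposal. No gap; both arguments close.
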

\begin{proof}
Let $u=S(f,g)$ be the unique solution of the variational inequality \eqref{iv} with $g=G[w]$ for any $w\in L^{2^*}(\Omega)$. If $C_*>0$ denotes the Sobolev constant as in Theorem \ref{thm21}, since $f_2=0$ corresponds always to the solution $u_2=0$, we have the a priori estimate
\begin{equation}\label{cf}
\|u\|_{L^{2^*}(\Omega)}\le C_*\|u\|_{H^\sigma_0(\Omega)}\le \tfrac{C_*}{a_*}\|f\|_{L^{2^\#}(\Omega)}\equiv c_f,
\end{equation}
independently of $g\in L^\infty_\nu(\R^N)$.

Set $B_{c_f}=\big\{v\in L^{2^*}(\Omega):\|v\|_{L^{2^*}(\Omega)}\le c_f\big\}$ and define the nonlinear map $T=S\circ G:L^{2^*}(\Omega)\ni w\mapsto u\in L^{2^*}(\Omega)$ where $u=S(f,G[w])\in\K_{G[w]}^\sigma\cap\C^{0,\beta}(\overline{\Omega})$, $0<\beta<\sigma$ by \eqref{kapai}.

Clearly, \eqref{cf} implies $T(B_{c_f})\subset B_{c_f}$ and, by the continuity of $G$ and Theorem \ref{thm22}, $T$ is also a continuous map. On the other hand, $G$ is bounded, i.e. transforms bounded sets in $L^{2^*}(\Omega)$ into bounded sets of $L^\infty_\nu(\R^N)$ and $S\circ T$ is also a bounded operator. Therefore,  by \eqref{kapai}, $T(B_{c_f})$ is also a bounded set of $C^{0,\beta}(\overline{\Omega})$. Since the embedding $C^{0,\beta}(\overline{\Omega})\hookrightarrow L^{2^*}(\Omega)$ is compact, the Schauder fixed point theorem guarantees the existence of $u=Tu$, which solves \eqref{iqv}.
\end{proof}

\begin{example}\label{exx41}
Consider the operator $G:L^{2^*}(\Omega)\rightarrow L^\infty_\nu(\R^N)$ defined as follows:
\begin{equation}\label{by}
G[u](x)=F(x,w(x)),
\end{equation}
where $F:\R^N\times\R\rightarrow\R$ is a function bounded in $x\in\R^N$ and continuous in $w\in\R$, uniformly in $x\in\R^N$, satisfying, for some $\nu>0$,
\begin{equation}
\label{ex41b}
0<\nu\le F(x,w)\le\varphi(|w|)\qquad\text{ a.e. }x\in\R^N,
\end{equation}
and for some monotone increasing function $\varphi$. We may choose
\begin{equation}
\label{ex41c}
w(x)=\int_{\Omega}\vartheta(x,y)u(y)\,dy,
\end{equation}
where we give $\vartheta\in L^\infty\big(\R^N_x;L^{2^\#}(\Omega_y)\big)$. For $u_n\underset{n}{\rightarrow}u$ in $L^{2^*}(\Omega)$, from the estimate
\begin{multline*}
\sup_{x\in\R^N}\left|w_n(x)-w(x)\right|=\sup_{x\in\R^N}\left|\int_{\Omega}\vartheta(x,y)(u_n(y)-u(y))dy\right|\\
\le \sup_{x\in\R^N}\|\vartheta(x,\cdot)\|_{L^2{^\#}(\Omega)}\|u_n-u\|_{L^{2^*}(\Omega)}
\end{multline*}
and by the uniform continuity of $F$, we have
$$\|G[u_n]-G[u]\|_{L^\infty(\R^N)}=\|F(w_n)- F(w)\|_{L^\infty(\R^N)}\underset{n}{\rightarrow}0,$$
implying the continuity of $G$.

The boundedness of $G$ is a consequence of \eqref{ex41b} and therefore $G$ satisfies the assumptions of Theorem \ref{thm41}.
\end{example}

\begin{example}
Consider now the operator $G:H^\sigma_0(\Omega)	\rightarrow L^\infty_\nu(\R^N)$ given also by \eqref{by} with $F$ under the same assumptions as in the previous example, but now with
\begin{equation}
w(x)=\Phi(u)(x)=\int_{\R^N}\Theta(x,y)\cdot D^\sigma u(y)dy,
\end{equation}
where $\Theta\in\C^{0}\big(\overline \Omega_x;\R^N_y\big)$. Now $G$ is not only bounded but also completely continuous, since $\Phi:H^\sigma_0(\Omega)\rightarrow\C^0(\overline\Omega)$ is also completely continuous. Indeed, if $u_n\underset{n}{\lraup}u$ in $H^\sigma_0(\Omega)$-weak, then $w_n=\Phi(u_n)\underset{n}{\rightarrow}\Phi(u)=w$ in $\C^{0}(\overline{\Omega})$, because $\{D^\sigma u_n\}_n$, being bounded in $L^2(\R^N)^N$ implies $\{w_n\}_n$ uniformly bounded in $\C^0(\overline \Omega)$, 
$$|w_n(x)|\le\|\Theta(x,\cdot)\|_{L^2(\R^N)^N}\|D^\sigma u_n\|_{L^2(\R^N)^N},\qquad\forall x\in\overline{\Omega}$$
and also equicontinuous in $\overline{\Omega}$ by
$$|w_n(x)-w_n(z)|\le C\|\Theta(x,\cdot)-\Theta(z,\cdot)\|_{L^2(\R^N)^N}.$$
\end{example}

But $G$ is not defined in the whole $L^{2^*}(\Omega)$ and therefore we cannot apply Theorem~\ref{thm41} to solve \eqref{iqv}. Nevertheless, the solvability of \eqref{iqv} in this example is an immediate consequence of the following theorem.

\begin{thm}
Assume \eqref{aa} and let $f\in L^{2^\#}(\Omega)$ as previously. If the nonlinear and nonlocal operator $G$ satisfies 
\begin{equation}
\label{410}
G:H^\sigma_0(\Omega)\rightarrow L^\infty_\nu(\R^N)\ \text{ is bounded and completely continuous}
\end{equation}
then there exists a solution $u$ to the quasi-variational inequality \eqref{iqv}.
\end{thm}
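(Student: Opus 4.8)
The plan is to run a Schauder fixed point argument in the space $H^\sigma_0(\Omega)$ equipped with its weak topology, rather than in $L^{2^*}(\Omega)$ as in Theorem \ref{thm41}, since here $G$ is only defined on $H^\sigma_0(\Omega)$. As before, for $w\in H^\sigma_0(\Omega)$ let $S(f,G[w])$ be the unique solution of the variational inequality \eqref{iv} with threshold $g=G[w]\in L^\infty_\nu(\R^N)$, which exists by Theorem \ref{thm21}, and define the map $T=S\circ G:H^\sigma_0(\Omega)\to H^\sigma_0(\Omega)$. The a priori estimate obtained by testing \eqref{iv} with $v=0$ (admissible since $0\in\K^\sigma_{G[w]}$) gives, exactly as in \eqref{cf},
$$a_*\|Tw\|_{H^\sigma_0(\Omega)}^2\le\int_{\R^N}AD^\sigma(Tw)\cdot D^\sigma(Tw)\le\int_\Omega f\,(Tw)\le C_*\|f\|_{L^{2^\#}(\Omega)}\|Tw\|_{H^\sigma_0(\Omega)},$$
so $\|Tw\|_{H^\sigma_0(\Omega)}\le (C_*/a_*)\|f\|_{L^{2^\#}(\Omega)}\equiv c_f$ independently of $w$. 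Hence the closed ball $\mathcal B_{c_f}=\{v\in H^\sigma_0(\Omega):\|v\|_{H^\sigma_0(\Omega)}\le c_f\}$ is invariant under $T$; this ball is convex and, by reflexivity, weakly compact and metrizable in the weak topology.

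The key point is the weak-to-weak continuity of $T$ on $\mathcal B_{c_f}$. Suppose $w_n\rightharpoonup w$ in $H^\sigma_0(\Omega)$. By hypothesis \eqref{410}, $G$ is completely continuous, so $g_n:=G[w_n]\to g:=G[w]$ strongly in $L^\infty(\R^N)$; since all $g_n,g\ge\nu$, this is convergence in $L^\infty_\nu(\R^N)$. Set $u_n=S(f,g_n)$ and $u=S(f,g)$; both lie in $\mathcal B_{c_f}$. Passing to a subsequence, $u_n\rightharpoonup \tilde u$ in $H^\sigma_0(\Omega)$; I must show $\tilde u=u$, and then a standard subsequence argument upgrades this to convergence of the full sequence. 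For the passage to the limit I use the Mosco convergence $\K^\sigma_{g_n}\to\K^\sigma_g$ noted in the Remark following Theorem \ref{thm22}: given any $v\in\K^\sigma_g$ there are $v_n\in\K^\sigma_{g_n}$ with $v_n\to v$ strongly in $H^\sigma_0(\Omega)$ (explicitly $v_n=\frac{\nu}{\nu+\|g_n-g\|_\infty}v$), and if $v_n\rightharpoonup v$ with $v_n\in\K^\sigma_{g_n}$ then $v\in\K^\sigma_g$; the latter applied to $u_n$ shows $\tilde u\in\K^\sigma_g$. Testing $(2.7)$ for $u_n$ with the recovery sequence $v_n$,
$$\int_{\R^N}AD^\sigma u_n\cdot D^\sigma(v_n-u_n)\ge\int_\Omega f(v_n-u_n),$$
and using $\int AD^\sigma u_n\cdot D^\sigma(v_n-u_n)=\int AD^\sigma(u_n-v_n)\cdot D^\sigma(v_n-u_n)+\int AD^\sigma v_n\cdot D^\sigma(v_n-u_n)\le\int AD^\sigma v_n\cdot D^\sigma(v_n-u_n)$, I pass to the limit ($v_n\to v$ strongly, $u_n\rightharpoonup\tilde u$) to get $\int_{\R^N}AD^\sigma v\cdot D^\sigma(v-\tilde u)\ge\int_\Omega f(v-\tilde u)$ for all $v\in\K^\sigma_g$; the Minty trick $v=\tilde u+\theta(z-\tilde u)$, $\theta\downarrow0$, then yields the variational inequality \eqref{iv} for $\tilde u$ with threshold $g$, and uniqueness gives $\tilde u=u=Tw$. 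Thus $Tw_n\rightharpoonup Tw$, so $T$ is continuous on $(\mathcal B_{c_f},\text{weak})$.

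Finally, since $\mathcal B_{c_f}$ is a nonempty, convex, weakly compact (hence also sequentially weakly compact and, being a bounded set in a separable reflexive space, weakly metrizable) subset of $H^\sigma_0(\Omega)$ and $T:\mathcal B_{c_f}\to\mathcal B_{c_f}$ is weakly continuous, the Schauder--Tychonoff fixed point theorem furnishes $u\in\mathcal B_{c_f}$ with $u=Tu=S(f,G[u])$, which is precisely a solution of the quasi-variational inequality \eqref{iqv}. I expect the main obstacle to be the passage to the limit in the variational inequality along the approximating thresholds $g_n$: one cannot test directly with a fixed $v\in\K^\sigma_g$ since it need not belong to $\K^\sigma_{g_n}$, so the Mosco-type recovery sequence from the Remark after Theorem \ref{thm22}, together with the weak lower semicontinuity built into the coercive bilinear form, is essential. (Alternatively, one could invoke the $\frac12$-Hölder continuity estimate \eqref{cdg} of Theorem \ref{thm22} to get $u_n\to u$ strongly in $H^\sigma_0(\Omega)$ directly from $g_n\to g$ in $L^\infty_\nu(\R^N)$, which makes the weak continuity of $T$ immediate and is perhaps the cleanest route.)
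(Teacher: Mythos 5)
Your proof is correct, and its skeleton (Schauder-type fixed point for $T=S\circ G$ on the invariant ball furnished by the a priori estimate \eqref{cf}) is the same as the paper's; the difference lies in how you obtain the continuity and compactness needed for the fixed point theorem. The paper's (very terse) argument is exactly your closing parenthetical: since $G$ is completely continuous, $w_n\rightharpoonup w$ in $H^\sigma_0(\Omega)$ gives $G[w_n]\to G[w]$ in $L^\infty_\nu(\R^N)$, and the $\tfrac12$-H\"older estimate \eqref{cdg} of Theorem \ref{thm22} then gives $Tw_n\to Tw$ \emph{strongly} in $H^\sigma_0(\Omega)$; thus $T$ is completely continuous and the ordinary Schauder theorem applies on the norm-closed convex ball $\mathcal B_{c_f}$, with no need for the weak topology. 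Your main route instead works on $(\mathcal B_{c_f},\text{weak})$ with Schauder--Tychonoff and re-derives the stability of $g\mapsto S(f,g)$ from scratch via the Mosco recovery sequence $v_n=\tfrac{\nu}{\nu+\|g_n-g\|_\infty}v$, weak closedness of the constraint, and Minty's trick; all of these steps are sound (metrizability of the weak topology on the ball of the separable Hilbert space $H^\sigma_0(\Omega)$ justifies arguing with sequences and upgrading subsequential limits). What your route buys is independence from the quantitative estimate \eqref{cdg} --- only Mosco convergence of the convex sets is used, so the argument would survive in situations where the uniform lower bound $\nu$ (on which the constant $C_\nu$ in \eqref{cdg} depends) is weakened; what it costs is length, since Theorem \ref{thm22} already packages the needed continuity.
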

\begin{proof}
Due to the estimate \eqref{cf} and the assumption \eqref{410}, the proof is analogous by applying the Schauder fixed point theorem to the nonlinear completely continuous map $$T=S\circ G: H^\sigma_0(\Omega)\ni w\mapsto u=S(f,G[w])\in H^\sigma_0(\Omega).$$
\end{proof}

\begin{example}
By restricting the domain of $G$ and using the same type of Carath\'eodory function $F$ as in Example \ref{exx41}, we can introduce the superposition operator
\begin{equation}\label{super}
G[u](x)=F(x,u(x)),\qquad u\in\C^0(\overline{\Omega}),\ x\in\R^N.
\end{equation}

In order to guarantee that $G:\C^{0}(\overline{\Omega})\rightarrow L^\infty_\nu(\R^N)$ is a continuous and bounded operator in an appropriate space to obtain a fixed point, we need to require that the function $F:\R^N\times\R\rightarrow\R$ is a bounded function in $x\in\R^N$ in each compact for the variable $u$, continuous in $u\in\R$ uniformly in $x\in\R^N$, and satisfying \eqref{ex41b}, where the function $\varphi$ is continuous and satisfies only
\begin{equation}\label{412}
0<\nu\le \varphi(t),\qquad t\in\R.
\end{equation}

This situation is covered by the next theorem.
\end{example}

\begin{thm}
Assume \eqref{aa}, let $f\in L^{1}(\Omega)$ and the functional $G$ be such that
\begin{equation}
\label{413}G:\C^0(\overline{\Omega})\rightarrow L^\infty_\nu(\R^N)\qquad\text{ is a continuous operator}.
\end{equation}
Then there exists a solution of the quasi-variational inequality \eqref{iqv}.
\end{thm}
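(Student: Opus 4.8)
The plan is to adapt the Schauder fixed point argument used in the preceding two theorems, the only new difficulty being that $G$ is merely defined on $\C^0(\overline\Omega)$, which is not the natural energy space, so the fixed-point map must be set up on a space of continuous functions. First I would fix $f\in L^1(\Omega)$ and recall from Theorem~\ref{thm21} that for every $w\in\C^0(\overline\Omega)$ the variational inequality \eqref{iv} with threshold $g=G[w]\in L^\infty_\nu(\R^N)$ has a unique solution $u=S(f,G[w])$, which by \eqref{kapai} lies in $\K^\sigma_{G[w]}\cap\C^{0,\beta}(\overline\Omega)$ for every $0<\beta<\sigma$. Thus the composite map $T=S\circ G$ is well defined from $\C^0(\overline\Omega)$ into $\C^{0,\beta}(\overline\Omega)\hookrightarrow\C^0(\overline\Omega)$.

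The key quantitative step is to produce a closed bounded convex set in $\C^0(\overline\Omega)$ that $T$ maps into itself. Since $G$ is bounded (because of \eqref{ex41b}--\eqref{412}), as $w$ ranges over all of $\C^0(\overline\Omega)$ the thresholds $G[w]$ take values in a single fixed ball of $L^\infty_\nu(\R^N)$; writing $M=\sup_w\|G[w]\|_{L^\infty(\R^N)}$, every $u=T(w)$ satisfies $|D^\sigma u|\le M$ a.e., so by Proposition~\ref{dsigmalq} (estimate \eqref{maj}) together with the $H^\sigma_0$-estimate $a_*\|u\|_{H^\sigma_0(\Omega)}^2\le\|f\|_{L^1(\Omega)}\|u\|_{L^\infty(\Omega)}\le\kappa\|f\|_{L^1(\Omega)}\|u\|_{H^\sigma_0(\Omega)}$ one obtains a bound $\|u\|_{H^\sigma_0(\Omega)}\le\kappa\|f\|_{L^1(\Omega)}/a_*$ and hence a uniform bound $\|u\|_{\C^{0,\beta}(\overline\Omega)}\le\rho$ from the bootstrap in Proposition~\ref{dsigmalq}, with $\rho$ depending only on $f$, $\Omega$, $a_*$ and $M$. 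The set $\mathcal B=\{v\in\C^0(\overline\Omega):\|v\|_{\C^{0,\beta}(\overline\Omega)}\le\rho\}$ is then convex, closed in $\C^0(\overline\Omega)$, invariant under $T$, and relatively compact in $\C^0(\overline\Omega)$ by Ascoli--Arzel\`a.

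It remains to check continuity of $T$ on $\mathcal B$ (equivalently on $\C^0(\overline\Omega)$). If $w_n\to w$ in $\C^0(\overline\Omega)$ then, by the assumed continuity \eqref{413} of $G:\C^0(\overline\Omega)\to L^\infty_\nu(\R^N)$, we get $g_n:=G[w_n]\to g:=G[w]$ in $L^\infty(\R^N)$ with all $g_n,g\ge\nu$; Theorem~\ref{thm22} then gives $\|S(f,g_n)-S(f,g)\|_{H^\sigma_0(\Omega)}\le C_\nu\|g_n-g\|_{L^\infty(\R^N)}^{1/2}\to0$, and since all these solutions lie in the fixed H\"older ball $\mathcal B$, the bound \eqref{maj} (or compactness of $\C^{0,\beta}\hookrightarrow\C^0$) upgrades this to convergence $T(w_n)\to T(w)$ in $\C^0(\overline\Omega)$. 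Thus $T:\mathcal B\to\mathcal B$ is continuous with relatively compact image, and the Schauder fixed point theorem yields $u\in\mathcal B$ with $u=Tu=S(f,G[u])$, which is precisely a solution of the quasi-variational inequality \eqref{iqv}. The main obstacle is organisational rather than deep: one must be careful that the a priori bounds used to build the invariant compact set are genuinely uniform over the \emph{whole} space $\C^0(\overline\Omega)$ (which works only because $G$ is globally bounded via \eqref{412}), and that the H\"older regularity from Proposition~\ref{dsigmalq} is exploited to get compactness in $\C^0(\overline\Omega)$ rather than merely in $L^{2^*}(\Omega)$.
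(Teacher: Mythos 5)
Your fixed-point setup (work in $\C^0(\overline\Omega)$, use \eqref{kapai} and the compact embedding $\C^{0,\beta}(\overline\Omega)\hookrightarrow\C^0(\overline\Omega)$, and verify continuity of $T=S\circ G$ via \eqref{413} and Theorem~\ref{thm22}) is the right framework, and your continuity argument for $T$ is essentially the one the paper uses. But there is a genuine gap in the construction of the invariant set $\mathcal B$: you take $M=\sup_w\|G[w]\|_{L^\infty(\R^N)}$ over the \emph{whole} of $\C^0(\overline\Omega)$ and claim it is finite "because of \eqref{ex41b}--\eqref{412}". Those conditions are not hypotheses of this theorem --- the only hypothesis on $G$ here is continuity \eqref{413} --- and even in the motivating superposition example they do not give global boundedness: \eqref{ex41b} bounds $F(x,w)$ by $\varphi(|w|)$ with $\varphi$ merely continuous, and $F$ is only required to be bounded in $x$ for $u$ in each \emph{compact} set, so $\|G[w]\|_{L^\infty}$ can blow up as $\|w\|_{\C^0(\overline\Omega)}\to\infty$. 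Without a finite $M$ there is no single radius $\rho$ making $T(\mathcal B)\subset\mathcal B$, since the constant $\kappa$ in \eqref{maj} and the bootstrap constants of Proposition~\ref{dsigmalq} themselves depend on $\|D^\sigma u\|_{L^\infty}\le\|G[w]\|_{L^\infty}$; trying to choose the radius of the ball of $w$'s and the resulting $\rho$ consistently is circular. Note that the \emph{previous} theorem explicitly assumed $G$ bounded and used Schauder; the whole point of stating the present theorem separately is to drop that assumption.

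The paper's way around this is to replace the plain Schauder theorem by the Leray--Schauder principle: one does not need an invariant ball, only an a priori bound on the set $\mathscr{S}=\{w\in\C^0(\overline\Omega): w=\theta Tw,\ \theta\in[0,1]\}$. For $w\in\mathscr{S}$ one has $w=\theta u$ with $u=Tw$ solving \eqref{iv} with $g=G[w]$, and the $L^1$--$L^\infty$ duality estimate from the proof of Theorem~\ref{thm21} gives
\begin{equation*}
\|w\|_{L^\infty(\Omega)}\le\kappa\,\theta\,\|D^\sigma u\|_{L^2(\R^N)^N}\le\tfrac{\kappa^2}{a_*}\|f\|_{L^1(\Omega)},
\end{equation*}
a bound on the solution set that does not require any global bound on $G$. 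Combined with the complete continuity of $T$ (which you have already established), this yields the fixed point. If you want to keep a pure Schauder argument, you must add the hypothesis that $G$ maps $\C^0(\overline\Omega)$ into a fixed bounded subset of $L^\infty_\nu(\R^N)$, which is strictly stronger than \eqref{413}.
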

\begin{proof}
As before, we set $T=S\circ G:\C^0(\overline{\Omega})\rightarrow H^\sigma_0(\Omega)$ and for  $w\in\C^0(\overline{\Omega})$, $u=S(f,G[w])$ solves \eqref{iv} with $g=G[w]$.

In order to apply the Leray-Schauder principle, we set
$$\mathscr{S}=\big\{w\in\C^0(\overline{\Omega}):w=\theta Tw,\,\theta\in[0,1]\big\}$$
and we show that $\mathscr{S}$ is a priori bounded in $L^\infty(\Omega)$. For any $w\in\mathscr{S}$, $u=Tw$ solves \eqref{iv} with $g=G[w]$. Hence we have, noting that $w=\theta u$,
\begin{equation*}
\|w\|_{L^\infty(\Omega)}\le \kappa \|D^\sigma w\|_{L^2(\R^N)^N}
\le \kappa\theta\|D^\sigma u\|_{L^2(\R^N)^N}\le\tfrac{\kappa^2}{a^*}\|f\|_{L^1(\Omega)}
\end{equation*}
if $\kappa>0$ is the constant of \eqref{maj}, by Theorem \ref{thm21}, and this a priori estimate is independent of $G$.

Since, by \eqref{inclusions}, $T(\C^0(\overline{\Omega}))\hookrightarrow \C^{0,\beta}(\overline{\Omega})\hookrightarrow \C^0(\overline{\Omega})$ and this last embedding is compact, we may conclude that $T$ is a completely continuous mapping into a closed ball of $\C^0(\overline{\Omega})$ and its fixed point $u=Tu$ solves \eqref{iqv}.
\end{proof}

It is clear that in general we cannot expect the uniqueness of solution to quasi-variational inequalities of the type \eqref{iqv}. However, the Lipschitz continuity of the solution map $f\mapsto u$ to the variational inequality \eqref{iv}, given by Theorem \ref{thm21}, allows us to obtain, via the strict contraction Banach fixed point principle, a uniqueness result in a special case of ``small'' and controlled variations of the convex sets for the quasi-variational situation with separation of variables in the nonlocal constraint $G$.

We denote, for $R>0$,
$$B_R=\big\{v\in H^\sigma_0(\Omega):\|v\|_{H^\sigma_0(\Omega)}\le R\big\}.$$
\begin{thm}
Let $f\in L^{2^\#}(\Omega)$, $\varphi\in L^\infty_\nu(\R^N)$ and
\begin{equation}
\label{415}
G[u](x)=\varphi(x)\Gamma(u),\qquad x\in\R^N,
\end{equation}
where $\Gamma:H^\sigma_0(\Omega)\rightarrow\R^+$  is a functional satisfying

{\em i)} $0<\eta(R)\le\Gamma(u)\le E(R),\quad\forall u\in B_R$,

{\em ii)} $|\Gamma(u_1)-\Gamma(u_2)|\le\gamma(R)\|u_1-u_2\|_{H^\sigma_0(\Omega)},\quad\forall u_1,u_2\in B_R,$

\noindent for sufficiently large $R\in\R^+$, with $\eta, E$ and $\gamma$ being monotone increasing positive functions of $R$. 

Then the quasi-variational inequality \eqref{iqv} has a unique solution, provided
\begin{equation}
2C_\#\,\frac{\gamma(R_f)}{\eta(R_f)}\,\|f\|_{L^{2^\#}(\Omega)}<1,
\end{equation}
where $R_f\equiv C_\#\|f\|_{L^{2^\#}(\Omega)}$ with $C_\#=C_*/a_*$ and $C_*$ is the constant of the Sobolev embedding as in \eqref{cf}.
\end{thm}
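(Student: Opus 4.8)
The plan is to set up a fixed-point map on a ball of $H^\sigma_0(\Omega)$ and show it is a strict contraction, so that the Banach fixed point principle gives existence and uniqueness. Define $T = S\circ G : H^\sigma_0(\Omega)\ni w\mapsto u = S(f,\varphi\,\Gamma(w))\in H^\sigma_0(\Omega)$, where $S(f,g)$ is the unique solution of the variational inequality \eqref{iv} with threshold $g$, as given by Theorem \ref{thm21}. The first step is to locate an invariant ball. By the a priori estimate \eqref{cf}, any solution $u=S(f,g)$ satisfies $\|u\|_{H^\sigma_0(\Omega)}\le C_\#\|f\|_{L^{2^\#}(\Omega)} = R_f$, independently of $g\in L^\infty_\nu(\R^N)$; hence $T(B_{R_f})\subset B_{R_f}$. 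Here it is convenient to work on a ball $B_R$ with $R\ge R_f$ large enough that hypotheses i)--ii) apply; since $\eta,E,\gamma$ are monotone increasing, taking $R=R_f$ (assumed sufficiently large) is the natural choice.

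The second and main step is the contraction estimate. Take $w_1,w_2\in B_{R_f}$ and set $g_i = \varphi\,\Gamma(w_i)\in L^\infty_\nu(\R^N)$, $u_i = T w_i = S(f,g_i)$. The idea, adapted from \cite{HintermullerRautenberg2013} and \cite{RodriguesSantos2019}, is to compare $u_1$ and $u_2$ by the rescaling trick already used in the proof of Theorem \ref{thm22}, but exploiting the multiplicative structure $g_i = \varphi\,\Gamma(w_i)$ so that $u_1$ rescaled by the ratio $\Gamma(w_2)/\Gamma(w_1)$ lands exactly in $\K^\sigma_{g_2}$. Precisely, since $|D^\sigma u_1|\le \varphi\,\Gamma(w_1)$ a.e., the function $\widetilde u_1 = \frac{\Gamma(w_2)}{\Gamma(w_1)}\,u_1$ satisfies $|D^\sigma \widetilde u_1| = \frac{\Gamma(w_2)}{\Gamma(w_1)}|D^\sigma u_1|\le \varphi\,\Gamma(w_2) = g_2$, so $\widetilde u_1\in\K^\sigma_{g_2}$, and symmetrically $\widetilde u_2 = \frac{\Gamma(w_1)}{\Gamma(w_2)}u_2\in\K^\sigma_{g_1}$. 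Testing $(2.7)_1$ for $u_1$ with $v=\widetilde u_2$ and $(2.7)_2$ for $u_2$ with $v=\widetilde u_1$, adding, and using coercivity \eqref{aa}, one gets
\begin{equation*}
a_*\|u_1-u_2\|^2_{H^\sigma_0(\Omega)}\le \int_{\R^N}AD^\sigma u_1\cdot D^\sigma(\widetilde u_2 - u_2) + \int_{\R^N}AD^\sigma u_2\cdot D^\sigma(\widetilde u_1-u_1) + \int_\Omega f\big((u_1-\widetilde u_2)+(u_2-\widetilde u_1)\big).
\end{equation*}
Now $\widetilde u_i - u_i = \big(\tfrac{\Gamma(w_j)}{\Gamma(w_i)}-1\big)u_i$, and $\big|\tfrac{\Gamma(w_j)}{\Gamma(w_i)}-1\big| = \tfrac{|\Gamma(w_j)-\Gamma(w_i)|}{\Gamma(w_i)}\le \tfrac{\gamma(R_f)}{\eta(R_f)}\|w_1-w_2\|_{H^\sigma_0(\Omega)}$ by i)--ii). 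Estimating the matrix terms by $a^*\|D^\sigma u_i\|_{L^2}\|D^\sigma u_i\|_{L^2}$ with $\|D^\sigma u_i\|_{L^2}\le R_f$, and the $f$-term by $\|f\|_{L^{2^\#}}\|u_i\|_{L^{2^*}}\le C_*\|f\|_{L^{2^\#}}R_f$, one arrives at
\begin{equation*}
a_*\|u_1-u_2\|^2_{H^\sigma_0(\Omega)}\le 2\,\tfrac{\gamma(R_f)}{\eta(R_f)}\,\|w_1-w_2\|_{H^\sigma_0(\Omega)}\big(a^*R_f^2 + C_*\|f\|_{L^{2^\#}}R_f\big).
\end{equation*}
Using $R_f = C_\#\|f\|_{L^{2^\#}} = \tfrac{C_*}{a_*}\|f\|_{L^{2^\#}}$ to simplify the right-hand side to a constant times $\|f\|^2_{L^{2^\#}}$, one must then show that the resulting Lipschitz constant for $T$ on $B_{R_f}$ is bounded by a quantity that is $<1$ precisely under the smallness hypothesis $2C_\#\,\tfrac{\gamma(R_f)}{\eta(R_f)}\|f\|_{L^{2^\#}(\Omega)}<1$; tracking the constants $a_*,a^*,C_*,C_\#$ through the algebra is the step that requires care. (A cleaner route that lands exactly on the stated threshold: use the sharper Lipschitz estimate \eqref{lip}, $\|S(f,g_1)-S(f,g_2)\|_{H^\sigma_0}\le C_\#\|f_1-f_2\|_{L^{2^\#}}$ has no bearing here, so instead combine the comparison above more economically by first comparing $u_1$ with $\widetilde u_2$ via the variational inequalities to produce directly a factor $C_\#\|f\|$ from the a priori bound.)

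The third step is the conclusion: once $T$ is shown to be a strict contraction on the complete metric space $B_{R_f}\subset H^\sigma_0(\Omega)$, the Banach fixed point theorem yields a unique $u\in B_{R_f}$ with $u = Tu = S(f,\varphi\,\Gamma(u))$, i.e. $u\in\K^\sigma_{G[u]}$ solves \eqref{iqv}; and since the a priori estimate forces every solution of \eqref{iqv} into $B_{R_f}$, uniqueness holds globally, not merely in the ball. The main obstacle I anticipate is purely bookkeeping: arranging the test-function choices and the chain of inequalities so that the final contraction constant is exactly $2C_\#\,\tfrac{\gamma(R_f)}{\eta(R_f)}\|f\|_{L^{2^\#}(\Omega)}$ rather than some larger multiple of it, which is why one wants to feed the a priori bound $\|u_i\|_{H^\sigma_0}\le R_f$ into the estimate at the right moment and use the identity $a_* R_f = C_*\|f\|_{L^{2^\#}}$ to collapse the constants.
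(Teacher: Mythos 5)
Your setup (the map $T=S\circ G$, the invariant ball $B_{R_f}$, and the reduction of $|\Gamma(w_1)/\Gamma(w_2)-1|$ to $\tfrac{\gamma(R_f)}{\eta(R_f)}\|w_1-w_2\|_{H^\sigma_0(\Omega)}$ via i)--ii)) is the right frame, but the core of your Step 2 has a genuine gap that is not just bookkeeping. The symmetric rescaling argument you borrow from the proof of Theorem \ref{thm22} produces an inequality of the form
\begin{equation*}
a_*\|Tw_1-Tw_2\|^2_{H^\sigma_0(\Omega)}\;\le\;C\,\tfrac{\gamma(R_f)}{\eta(R_f)}\,\|w_1-w_2\|_{H^\sigma_0(\Omega)},
\end{equation*}
with the \emph{square} on the left and only the \emph{first power} of $\|w_1-w_2\|$ on the right. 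That is exactly why Theorem \ref{thm22} only yields $\tfrac12$-H\"older dependence on the threshold: you get $\|Tw_1-Tw_2\|\le C'\|w_1-w_2\|^{1/2}$, which is not a Lipschitz bound and in particular is never a contraction near the diagonal (for $\|w_1-w_2\|$ small, $\|w_1-w_2\|^{1/2}\gg\|w_1-w_2\|$). No amount of constant-tracking repairs this; the structure of the estimate is wrong for Banach's fixed point theorem.

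The ingredient you are missing --- and which you explicitly (and incorrectly) dismiss in your parenthetical as having ``no bearing here'' --- is the Lipschitz continuity of $S$ in $f$ for a \emph{fixed} threshold, estimate \eqref{lip}, combined with the exact homogeneity of the solution map: $S(\mu f,\mu g)=\mu S(f,g)$ for $\mu>0$ (rescaling the data rescales both the convex set $\K^\sigma_{\mu g}=\mu\K^\sigma_g$ and the linear form). Writing $g=\varphi\,\Gamma(w_1)$ and $\mu g=\varphi\,\Gamma(w_2)$ with $\mu=\Gamma(w_2)/\Gamma(w_1)\ge 1$, one splits
\begin{equation*}
\|S(f,g)-S(f,\mu g)\|_{H^\sigma_0(\Omega)}\le\|S(f,g)-S(\mu f,\mu g)\|_{H^\sigma_0(\Omega)}+\|S(\mu f,\mu g)-S(f,\mu g)\|_{H^\sigma_0(\Omega)},
\end{equation*}
where the first term equals $(\mu-1)\|u_1\|_{H^\sigma_0(\Omega)}\le(\mu-1)R_f$ by homogeneity and \eqref{cf}, and the second is at most $(\mu-1)C_\#\|f\|_{L^{2^\#}(\Omega)}$ by \eqref{lip} applied with the common threshold $\mu g$. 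Since $R_f=C_\#\|f\|_{L^{2^\#}(\Omega)}$ and $\mu-1\le\tfrac{\gamma(R_f)}{\eta(R_f)}\|w_1-w_2\|_{H^\sigma_0(\Omega)}$, this gives the genuinely Lipschitz bound $\|Tw_1-Tw_2\|_{H^\sigma_0(\Omega)}\le 2C_\#\tfrac{\gamma(R_f)}{\eta(R_f)}\|f\|_{L^{2^\#}(\Omega)}\|w_1-w_2\|_{H^\sigma_0(\Omega)}$, i.e.\ exactly the stated contraction constant. Your Steps 1 and 3 (invariance of $B_{R_f}$ and the observation that the a priori estimate makes uniqueness global, not merely in the ball) are fine as written.
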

\begin{proof}
	Let $S:B_R\ni v\mapsto u\in H^\sigma_0(\Omega)$ be the solution map with $u=S(f,G[v])$ being the unique solution of the variational inequality \eqref{iv} with $g=G[v]$.
	
	The a priori estimate \eqref{cf} implies $S(B_{R_{f}})\subset B_{R_{f}}$.
	
	Given $v_i\in B_R$, let $u_i=S(v_i)=S(f,\varphi\,\Gamma(v_i))$, $i=1,2$, and choose $\mu=\frac{\Gamma(v_2)}{\Gamma(v_1)}>1$, without loss of generality.
	
	Setting $g=\varphi\,\Gamma(v_1)$, we have $\mu\,g=\varphi\,\Gamma(v_2)$ and
	$$S(\mu\,f,\mu\,g)=\mu S(f,g),$$
	$$\mu-1=\frac{\Gamma(v_2)-\Gamma(v_1)}{\Gamma(v_1)}\le\frac{\gamma(R_f)}{\eta(R_f)}\|v_1-v_2\|_\sigma$$
	by recalling the assumptions i) and ii) and denoting $\|w\|_\sigma=\|w\|_{H^\sigma_0(\Omega)}$ for simplicity.
	
	Consequently, using \eqref{cf} and \eqref{lip} with $f_1=f$ and $f_2=\mu\,f$, we have
	\begin{align*}
	\|S(v_1)-S(v_2)\|_\sigma&\le \|S(f,g)-S(\mu f,\mu g)\|_\sigma+\|S(\mu f,\mu g)-S( f,\mu g)\|_\sigma\\&\le(\mu-1)\|u_1\|_\sigma+(\mu-1)C_\#\|f\|_{L^{2^\#}(\Omega)}\\
	&\le 2C_\#(\mu-1)\|f\|_{L^{2^\#}(\Omega)}\\
	&\le 2C_\#\frac{\gamma(R_f)}{\eta(R_f)}\|v_1-v_2\|_\sigma\|f\|_{L^{2^\#}(\Omega)}
	\end{align*}
	and the conclusion of the theorem follows immediately.
\end{proof}

\begin{example} We can take $\Gamma$ of the form
	$$\Gamma(u)=\int_{\R^N}e(y,u(y),D^\sigma u(y))\,dy,\quad u\in H^\sigma_0(\Omega),$$
	with $e:\R^N\times\R\times\R^N\rightarrow[\eta,\infty)$, for some $\eta>0$, under a local Lipschitz condition of the type
	$$|e(y,v,\xi)-e(y,w,\zeta)|\le\gamma(R)\big(|v-w|+|\xi-\eta|\big)$$
	for $|v|$, $|w|$, $|\xi|$ and $|\zeta|$ less or equal to $R$.

\end{example}

\begin{rem}
	Assumptions {\em i)} and {\em ii)} have been used in Appendiz B of \cite{HintermullerRautenberg2013}  under the implicit assumptions of smallness of the term $f$, and in \cite{RodriguesSantos2019} in a simplified and more precise form in the case of gradient type (i.e. $\sigma=1$) and for a class of general operators of p-Laplacian type.
	\end{rem}

\begin{rem} The existence of solution of the  quasi-variational inequality \eqref{iqv} is obtained in this section by finding a fixed point of the map $w\mapsto S(f,G[w])=u$, under suitable assumptions. But when $u=S(f,G[w])$ is the solution of \eqref{iv} then there exists $\lambda\in L^\infty(\R^N){\bs '}$ such that $(u,\lambda)$ solves problem \eqref{lm1}-\eqref{lm2} with data $(f,G[w])$. In particular, when $u$ is a fixed point  $u=S(f,G[u])$ it solves the quasi-variational inequality, and we immediately get existence of a solution $(\lambda,u)$ of problem \eqref{lm1}-\eqref{lm2} for the quasi-variational case.
\end{rem}

\section*{Acknowledgements}
	
	The research of J. F. Rodrigues was partially done under the framework of the project PTDC/MAT-PUR/28686/2017 at CMAFcIO/ULisboa and L. Santos was partially supported by the Centre of Mathematics the University of Minho through the Strategic Project PEst UID/MAT/00013/2013.

\def\ocirc#1{\ifmmode\setbox0=\hbox{$#1$}\dimen0=\ht0 \advance\dimen0
by1pt\rlap{\hbox to\wd0{\hss\raise\dimen0
\hbox{\hskip.2em$\scriptscriptstyle\circ$}\hss}}#1\else {\accent"17 #1}\fi}

\end{document}